\documentclass[11pt]{article}

\usepackage{tikz}
\usetikzlibrary{automata,arrows.meta,positioning}

\usepackage[margin=1.15in]{geometry}
\usepackage{amsmath,amssymb,amsthm,mathtools}
\usepackage{enumitem}
\usepackage{hyperref}
\usepackage{xcolor}

\hypersetup{
  colorlinks=true,
  linkcolor=blue!60!black,
  citecolor=blue!60!black,
  urlcolor=blue!60!black
}

\newtheorem{theorem}{Theorem}[section]
\newtheorem{proposition}[theorem]{Proposition}

\newtheorem{corollary}[theorem]{Corollary}
\newtheorem{conjecture}[theorem]{Conjecture}

\theoremstyle{definition}
\newtheorem{definition}[theorem]{Definition}
\newtheorem{example}[theorem]{Example}
\theoremstyle{remark}
\newtheorem{remark}[theorem]{Remark}

\newcommand{\Irr}{\mbox{Irr}}
\newcommand{\Ind}{\mbox{Ind}}
\newcommand{\Res}{\mbox{Res}}
\newcommand{\mult}{\mbox{mult}}
\newcommand{\Hom}{\mbox{Hom}}
\newcommand{\supp}{\mbox{supp}}
\newcommand{\Spec}{\mbox{Spec}}
\newcommand{\Sch}{\mbox{Sch}}
\newcommand{\Cay}{\mbox{Cay}}

\title{On the Laplacian spectral gap of generalized pancake graphs}
\author{Sa\'ul A. Blanco\footnote{Department of Computer Science. Indiana University, Bloomington. Email: \texttt{sblancor@iu.edu}}}
\date{September 3, 2026}

\begin{document}
\maketitle

\begin{abstract}
    The generalized pancake graph $P(m,n)$ is the Cayley graph of the group of colored permutations $\mathbb{Z}_m\wr S_n=(\mathbb{Z}_m)^n\rtimes S_n$ generated by generalized prefix reversals. In this paper, we establish that, for all $m,n\geq2$, the spectral gap $\gamma(P(m,n))$ of the normalized Laplacian satisfies $\alpha_m/n\leq\gamma(P(m,n))\leq1/n$,  where $\alpha_m$ is a positive constant that depends only on $m$. As a consequence, for every fixed $m\geq2$, $\gamma(P(m,n))$ is $\Theta_m(1/n)$ as $n\to\infty$. The proof combines Cesi's semi-recursive spectral-gap inequality with a Fourier decomposition of the appropriate operators associated with a coset Schreier graph of color-position pairs. For fixed $n\geq2$, we also establish that $\gamma(P(m,n))$ is $\Theta_n(m^{-2})$ as $m\to\infty$. This disproves a conjecture of Blanco and Buehrle asserting that, for fixed $n$, the corresponding undirected generalized pancake graphs form an expander family. Additionally, we present a counterexample to a recent conjecture of Greaves and Zhu concerning equality between the spectral gaps of the full Cayley graph and the associated coset Schreier graph.
\end{abstract}

\noindent\textbf{Keywords.}
Generalized pancake graph, prefix reversal, Laplacian spectral gap, Cayley graph, Schreier graph.

\section{Introduction}

Graphs generated by reversals and prefix reversals are of interest in computer science and mathematics. Several authors have studied their adjacency or Laplacian spectral gap, or that of related graphs~\cite{BlancoBuehrle2024, BB25, Cesi2009, ChungTobin2017, Greaves2026}. To be more specific, Cesi~\cite{Cesi2009} proved that the unnormalized spectral gap of the pancake graph is exactly 1. Later, Chung and Tobin extended Cesi's results to reversal graphs~\cite{ChungTobin2017}. Chung and Tobin also discussed the spectral gap of the burnt pancake graph. Moreover, Cesi specialized the semi-recursive method to the hyperoctahedral group tower $W(B_{n-1})\leq W(B_n)$. Indeed,~\cite[Proposition 4.2]{Cesi20} is the $m=2$ specialization of the comparison used here; however, the results from~\cite{Cesi20} concern a different class of generators. Blanco and Buehrle~\cite{BlancoBuehrle2024} established that the spectrum of this graph includes a range of integers from $\{0,\ldots,n\}\setminus \{\lfloor\frac{n}{2}\rfloor\}$. Similar results were later found for all generalized pancake graphs, both directed and undirected~\cite{BB25}. 

For $m,n\geq1$, we write $S(m,n)$ to denote the group of colored permutations $\mathbb{Z}_m\wr S_n=(\mathbb{Z}_m)^n\rtimes S_n$. We furthermore use $R(m,n)$ to denote the set of generalized prefix reversals. The generalized pancake graph $P(m,n)$ is the Cayley graph of $S(m,n)$ with respect to $R(m,n)$. In this paper, we prove a tight asymptotic bound for the normalized Laplacian spectral gap of generalized pancake graphs. In terms of the unnormalized adjacency spectral gap, the results here show that for fixed $m$, it is bounded above and below by positive constants depending only on $m$, uniformly for all $n\geq2$. However, a closed formula for the spectral gaps, adjacency or Laplacian, remains open. 

The proof follows the recursive philosophy introduced by Caputo, Liggett and Richthammer in their proof of Aldous's spectral gap conjecture~\cite{CLR10} and, independently in a related form, by Dieker~\cite{Dieker10}. We use Cesi’s representation-theoretic semi-recursive formulation of that strategy. More specifically, we apply the semi-recursive inequality from Proposition 3.2 in~\cite{Cesi2016} to the tower $S(m,n-1)\leq S(m,n)$. This inequality relates the spectral gap at level $n$ to the spectral gap at level $n-1$ and to the spectral gap of the coset permutation representation on $S(m,n)/S(m,n-1)$. This coset space is naturally identified with $\mathbb{Z}_m\times[n]$, and the associated Schreier graph records the color and position of one distinguished symbol. The same $mn$-vertex color-position graph arises as an equitable quotient that was previously studied by Blanco and Buehrle~\cite{BlancoBuehrle2024, BB25} and by Greaves and Zhu~\cite{Greaves2026}. Our contribution is to estimate the normalized Laplacian spectral gap of this color-position Schreier graph by decomposing its normalized adjacency operator into Fourier blocks, each of size $n\times n$. Combining these estimates with Cesi’s inequality in an inductive argument yields bounds for the spectral gap of the full Cayley graph on $m^nn!$ vertices. \\

\textbf{Contribution.} The main result of the paper is the following: 

\begin{theorem}[Main Theorem]\label{thm:main} If $m\geq2$ and $n\geq2$, then the normalized Laplacian spectral gap $\gamma(P(m,n))$ of $P(m,n)$ satisfies the following inequality
\[
\frac{\alpha_m}{n}\leq\gamma(P(m,n))\leq \frac{1}{n},
\] where
\[\alpha_m=\begin{cases}
    1-\cos(\pi/m)&\text{ if $m$ is odd}\\
    \min\{1-\cos(2\pi/m),2-\sqrt{2}\}&\text{ if $m$ is even}
\end{cases}.
\] In particular, for fixed $m\geq2$,  $\gamma(P(m,n))$ is $\Theta_m(1/n)$ as $n\to\infty$. 
\end{theorem}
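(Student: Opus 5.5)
The plan is to prove the two inequalities separately: the upper bound with an explicit test function, and the lower bound by induction on $n$ using Cesi's semi-recursive inequality~\cite[Proposition 3.2]{Cesi2016} for the tower $S(m,n-1)\leq S(m,n)$. Throughout, $\gamma$ is the smallest nonzero eigenvalue of $I-\frac{1}{|R|}A$. Equivalently, it is the infimum over mean-zero $f$ of $\mathcal{E}(f)/\mathrm{Var}(f)$, where
\[
\mathcal{E}(f)=\tfrac12\,\mathbb{E}_{\sigma}\mathbb{E}_{r\in R(m,n)}\big(f(\sigma)-f(\sigma r)\big)^2 .
\]
I will use that $|R(m,n)|=mn$: for each length $i\in[n]$ there are $m$ colored prefix reversals of length $i$, and exactly the $m$ of length $n$ touch the last position. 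If the undirected convention changes this count slightly, the same argument applies with $1/n$ read as the fraction of generators of full length.

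\textbf{Upper bound.} Take $f(\sigma)=h(c_n(\sigma))$, where $c_n(\sigma)$ is the color in the last position and $h$ is a nonconstant mean-zero function on $\mathbb{Z}_m$. Only the $m$ full-length reversals change $c_n$, so with probability $1-1/n$ the increment vanishes. A full-length reversal with color parameter $j$ sends the first entry to the last position with color $\pm c_1(\sigma)+j$ (sign according to the convention). Averaging over $j$, the new last color is uniform and independent of $c_n(\sigma)$. Hence
\[
\mathcal{E}(f)=\frac1n\cdot\frac12\,\mathbb{E}\big(h(X)-h(Y)\big)^2=\frac1n\,\mathrm{Var}(h),
\]
with $X,Y$ independent and uniform, so $\gamma\le 1/n$. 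Alternatively, one lifts an eigenvector of the color-position Schreier graph, since a quotient's spectrum is contained in the Cayley spectrum.

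\textbf{Lower bound.} I will use Cesi's inequality in the following form. Let $\Sch_n$ be the Schreier graph of $S(m,n)$ acting on the cosets $S(m,n)/S(m,n-1)\cong\mathbb{Z}_m\times[n]$, which records the color and position of the symbol $n$. Restricting the generators to those lying in $S(m,n-1)$ (lengths $<n$, a fraction $(n-1)/n$ of them) gives a relation of the form
\[
\gamma(P(m,n))\;\ge\;\min\Big\{\tfrac{n-1}{n}\,\gamma(P(m,n-1)),\;\gamma(\Sch_n)\Big\},
\]
after matching normalizations. I will check the precise form and the normalization against~\cite{Cesi2016}. Granting this, the induction $\gamma(P(m,n-1))\ge\alpha_m/(n-1)$ makes the first term at least $\alpha_m/n$. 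The base case is $n=1$, where $P(m,1)$ is the cycle-type Cayley graph of $\mathbb{Z}_m$, or a direct computation at $n=2$. It therefore suffices to show $\gamma(\Sch_n)\ge\alpha_m/n$.

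For the Schreier graph, let $\mathbb{Z}_m$ act on the color coordinate. The normalized adjacency operator commutes with a twisted version of this action (twisted because reversals negate colors), so $L^2(\mathbb{Z}_m\times[n])$ splits into blocks indexed by characters $\chi_k$, or by the pairs $\{k,-k\}$, each block an $n\times n$ matrix $M_k$ on positions. Two facts drive the estimate. First, a generator of length $i\ge p$ moves position $p$ to position $i+1-p$, with a color shift of $j$ averaged over $j\in\mathbb{Z}_m$. Second, every vertex has probability about $(n-p+1)/n$ of moving, and the move always goes through position $1$ or recolors. For $k\ne0$, averaging $\omega^{kj}$ over the colored copies annihilates the off-diagonal mass coming from free color choices, so $M_k$ has norm at most $1-\frac{c}{n}$ with $c$ governed by $1-\cos(2\pi k/m)$, or by $1-\cos(\pi/m)$ after the sign twist pairs $k$ with $-k$ when $m$ is odd. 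The $k=0$ block is the position Schreier graph of ordinary pancake flips, which I will bound directly. The constant $2-\sqrt2$ for even $m$ should come from the character $k=m/2$ interacting with a $2\times2$ sub-block (a small cosine of $\pi/4$). This block estimate, namely a uniform bound $\lambda_{\max}(M_k)\le 1-\alpha_m/n$ over all $n$, is the main obstacle. I would first try a Gershgorin- or Rayleigh-type bound that exploits the fact that row $p$ of $M_k$ has diagonal weight about $(p-1)/n$ (the short reversals that fix $p$), with the remaining weight $(n-p+1)/n$ carrying a phase factor. I would combine this with a test-vector argument on the positions near $1$, where the diagonal weight is small.
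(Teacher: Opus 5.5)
Your upper bound is correct, and it is a nice alternative to the paper's test vector $\mathbf{e}_n$ in a nonzero color block. With the actual generators, only $r_n^{\pm}$ (a $1/n$ fraction of $R(m,n)$) move the last position. The new last color $c_1(\sigma)\pm1$ is uniform and independent of $c_n(\sigma)$ because the colors of a uniform $\sigma$ are i.i.d.; averaging over a free color parameter plays no role. Your normalized form of Cesi's inequality is also correct: it is $\psi_{m,n}\ge\min\{\psi_{m,n-1},\beta_{m,n}\}$ divided by $q_mn$.

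The gap is the estimate $\gamma(\Sch_n)\ge\alpha_m/n$. It carries the whole lower bound, and you leave it as ``the main obstacle'', sketched from a wrong model of $R(m,n)$.
\begin{itemize}
\item The generators are only $r_i^{\pm}$, so $|R(m,n)|=q_mn$, not $mn$.
\item They shift colors by $\pm1$ rather than negate them, so there is no twist. The color Fourier transform gives blocks $(T_{n,r}f)(p)=\frac1n(p-1)f(p)+\frac{c_r}{n}\sum_{q=1}^{n+1-p}f(q)$ with $c_r=\cos(2\pi r/m)$.
\item The off-diagonal mass is scaled by $c_r$, not annihilated. In your model it would be annihilated, the bound would not depend on $m$, and no cosine would appear.
\end{itemize}
With the correct blocks, your row-sum idea does handle $|c_r|<1$ and gives $\lambda_{\max}(T_{n,r})\le 1-(1-|c_r|)/n$. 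For odd $m$ the worst case is $r=(m\pm1)/2$, where $c_r=-\cos(\pi/m)$ is close to $-1$. That, not a pairing of $k$ with $-k$, is the source of $1-\cos(\pi/m)$. The block $r=0$ is also harmless: the terms $|f(1)-f(q)|^2$ coming from $r_q$ already form a star, which gives gap at least $1/n$ (the paper instead quotes Gunnells--Scott--Walden).

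What is genuinely missing is the case $m$ even, $r=m/2$, $c_r=-1$. There the absolute row sums of $T_{n,m/2}$ reach $1$ (for example in row $p=n$), so Gershgorin gives nothing. A test-vector argument also cannot help: it only bounds $\lambda_{\max}$ from below, i.e., it bounds the gap from the wrong side.

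The paper handles this block as follows.
\begin{itemize}
\item Write $T_{n,m/2}=\frac1n\sum_{i=1}^nO_i$ with signed involutions: $(O_if)(p)=-f(i+1-p)$ for $p\le i$ and $(O_if)(p)=f(p)$ otherwise.
\item This gives $\langle f,(I-T_{n,m/2})f\rangle=\frac{1}{2n}\sum_{i}\sum_{p\le i}|f(p)+f(i+1-p)|^2$.
\item Keep only the terms $i=2q-1,\ p=q$, which give $4|f(q)|^2$, and the terms with $i=n$, which give $2|f(q)+f(n+1-q)|^2$ (plus $4|f(\tfrac{n+1}{2})|^2$ when $n$ is odd).
\item Apply $4|x|^2+2|x+y|^2\ge(4-2\sqrt2)(|x|^2+|y|^2)$ to get $\lambda_{\max}(T_{n,m/2})\le 1-(2-\sqrt2)/n$.
\end{itemize}
Your guess of a $2\times2$ sub-block at $k=m/2$ points in this direction. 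Without an argument of this kind, however, neither $\gamma(\Sch_n)\ge\alpha_m/n$ nor the theorem's lower bound is established.
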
 

As a corollary, we also obtain that for fixed $n\geq2$, $\gamma(P(m,n))$ is $\Theta_n(m^{-2})$ as $m\to\infty$. More specifically, we show that
\[
\frac{2}{nm^2}\leq \gamma(P(m,n))\leq\frac{\pi^2(n+1)}{nm^2}.
\]

This bound is used to disprove a conjecture of Blanco and Buehrle~\cite[Conjecture 6.3]{BB25} asserting that the family $\{P(m,n)\}_{m>2}$, with fixed $n>2$, is an expander family.

The Main Theorem extends known results by Cesi~\cite{Cesi2009} and Chung and Tobin~\cite{ChungTobin2017} for the pancake graph case, $m=1$. Indeed, they establish that $\gamma(P(1,n))=\frac{1}{n-1}$ for $n\geq3$. Therefore, $\gamma(P(1,n))$ is $\Theta(1/n)$ as $n\to\infty$ as well. Further, $\gamma(P(1,2))=2$ since $P(1,2)$ is an edge. So the spectral gap is known for pancake graphs for all $n\geq2$. \\

\textbf{Organization. } The organization of the paper is as follows. We present the needed preliminaries and notation in Section~\ref{sec:prelim}. This includes the needed representation theory and spectral graph theory definitions. In Section~\ref{sec:Cesi}, we present Cesi's semi-recursive results, including a form in terms of a coset Schreier graph. This allows us to reduce the computations from a graph with $m^nn!$ vertices to a graph with $mn$ vertices. In Section~\ref{sec:proof} we include the proof of the Main Theorem, and as a corollary, we also prove an asymptotic bound for $\gamma(P(m,n))$ with fixed $n$. We end with some remarks, including some directions for future research. We also include an observation by Qiyuan (Alex) Gu providing a counterexample to a conjecture of Greaves and Zhu~\cite{Greaves2026}, and formulate a new conjecture. 

\section{Preliminaries}\label{sec:prelim}

Let $G$ be a finite group and let $S\subseteq G$ be a generating set. We denote the \textit{Cayley graph} of $G$ with respect to $S$ by $\Cay(G,S)$. This is the graph with vertex set $G$ and edge set $\{(x,s\cdot x):x\in G,s\in S\}$. 

Consider integers $m,n\geq1$. The set $\{1,\ldots,n\}$ is denoted by $[n]$. Let $S_n$ denote the symmetric group of degree $n$, and let $\mathbb{Z}_m=\mathbb{Z}/m\mathbb{Z}$ be the group of integers modulo $m$. We can let $S_n$ act on $(\mathbb{Z}_m)^n$ by permuting coordinates. That is, if $\sigma\in S_n,a\in(\mathbb{Z}_m)^n$ and $a_i$ denotes the $i$th component of $a$, then $(\sigma\cdot a)_j=a_{\sigma^{-1}(j)}$. Let $S(m,n)$ denote the group $\mathbb{Z}_m\wr S_n=(\mathbb{Z}_m)^n\rtimes S_n$. This group is called the group of \textit{colored permutations}. 

For $1\leq i\leq n$, let $r_i\in S_n$ be the permutation that reverses the first $i$ positions in one-line notation, and leaves every other position fixed. For example, $r_3(541236)=145236$. We set $r_1=e$, the identity permutation in $S_n$.

For $m\geq 2$ and $n\geq1$, the set of \textit{generalized prefix reversals}  $R(m,n)$ is the set of all elements of the form $\{r^{\varepsilon}_i=(c^{\varepsilon}_i,r_i)\in S(m,n):1\leq i\leq n\text{ and }\varepsilon\in\{-,+\}\}$, where $c^{\varepsilon}_i$ is the $n$-dimensional vector with the first $i$ components are all $\varepsilon1$, and the remaining components are 0. That is,
\[
c^{\varepsilon}_i=(\underbrace{\varepsilon1,\varepsilon1,\ldots,\varepsilon1}_{i\text{ components}},0,\ldots,0)\in(\mathbb{Z}_m)^n,
\] where $\varepsilon1$ denotes $-1$ or $+1$, respectively. 

We note that if $m=2$ then $r^+_i=r^-_i$ for all $1\leq i\leq n$. Additionally, we set $R(1,n)=\{r_i:2\leq i\leq n\}$, and so $|R(1,n)|=n-1, |R(2,n)|=n$, and $|R(m,n)|=2n$ for $m\geq3$.

The \textit{generalized pancake graph} or \textit{prefix-reversal graph}, denoted by $P(m,n)$, is the Cayley graph $\Cay(G,S)$ with $G=S(m,n)$ and $S=R(m,n)$. The graphs $P(1,n)$ and $P(2,n)$ are referred to as the \textit{pancake graph} and \textit{burnt pancake graph}, respectively. Since the set of prefix reversals generates $S_n$ in the case $m=1$ and one can cycle through all different colors in the first position, $P(m,n)$ is connected. Moreover, these graphs are weakly pancyclic if $m=1,2$ (see Kanevsky and Feng~\cite{Kanevsky1995} and Blanco, Buehrle and Patidar~\cite{BBP19}). Blanco and Buehrle~\cite{BB23} later proved that if $m\geq3$ is odd, then $P(m,n)$ is also weakly pancyclic, whereas if $m\geq3$ is even, then $P(m,n)$ possesses cycles of all even length $\ell\geq\min\{m,6\}$. 

Let $\mathcal{G}=(V,E,w_E)$ be a weighted graph with weight function $w_E:E\to\mathbb{R}_{\geq0}$. The adjacency matrix $A_\mathcal{G}$ of $\mathcal{G}$ is the matrix whose entries are of the form
\[
A_{\mathcal{G}}(u,v)=\sum_{\substack{e\in E\\e\text{ joins }u,v}}w_E(e), 
\] and the degree $d_v$ of $v\in V$ is the sum $d_v=\sum_{u\in V}A_\mathcal{G}(v,u)$. We are only concerned with graphs that do not have isolated vertices; that is, $d_v\neq0$ for all $v\in V$. Let $D_\mathcal{G}$ be the diagonal matrix whose $v$th entry is the degrees $d_v$ of vertex $v$. The \textit{unnormalized Laplacian} of $\mathcal{G}$ is defined as $\mathcal{L}_\mathcal{G}=D_\mathcal{G}-A_\mathcal{G}$. Moreover, the \textit{normalized adjacency matrix} of $\mathcal{G}$ is defined as $\mathcal{NA}_\mathcal{G}=D^{-1/2}_\mathcal{G}A_{\mathcal{G}}D^{-1/2}_\mathcal{G}$, and the \textit{normalized Laplacian} of $\mathcal{G}$ is defined as $\mathcal{NL}_\mathcal{G}=D_\mathcal{G}^{-1/2}\mathcal{L}_\mathcal{G}D_\mathcal{G}^{-1/2}=I-\mathcal{NA}_\mathcal{G}$. If $\mathcal{G}$ is $d$-regular, then $\mathcal{NA}_\mathcal{G}=\frac{1}{d}A_\mathcal{G}$, $\mathcal{L}_\mathcal{G}=dI-A_\mathcal{G}$, and $\mathcal{NL}_\mathcal{G}=I-\frac{1}{d}A_\mathcal{G}$. The graphs treated in this paper are all regular. In particular,  Cayley graphs $\Cay(G,S)$ are $d$-regular with $d=|S|$.

We write $0=\nu_1(\mathcal{NL}_\mathcal{G})\leq \nu_2(\mathcal{NL}_\mathcal{G})\leq \cdots$ to denote the eigenvalues of $\mathcal{NL}_\mathcal{G}$, and $\lambda_1(\mathcal{NA}_\mathcal{G})\geq\lambda_2(\mathcal{NA}_\mathcal{G})\geq\cdots$ to denote the eigenvalues of $\mathcal{NA}_\mathcal{G}$. 

The \textit{normalized Laplacian spectral gap} of $\mathcal{G}$ is defined as $\nu_2(\mathcal{NL}_\mathcal{G})$, and we denote it by $\gamma(\mathcal{G})$. The \textit{unnormalized adjacency spectral gap} of $\mathcal{G}$ is defined as $\lambda_1(A_\mathcal{G})-\lambda_2(A_\mathcal{G})$. Moreover, for $d$-regular graphs, $\lambda_1(A_\mathcal{G})=d$, and then 
\[
\gamma(\mathcal{G})=\nu_2(\mathcal{NL}_\mathcal{G})=\frac{1}{d}\nu_2(\mathcal{L}_{\mathcal{G}})=1-\frac{1}{d}\lambda_2(A_\mathcal{G}).
\]

\textbf{Important convention:} When we say \textit{spectral gap} of $\mathcal{G}$ and write $\gamma(\mathcal{G})$, we mean the normalized spectral gap of the Laplacian of $\mathcal{G}$, $\nu_2(\mathcal{NL}_\mathcal{G})$. That is, $\gamma(\mathcal{G})=\nu_2(\mathcal{NL}_\mathcal{G})$.

\subsection{Rayleigh quotient and variational principle}\label{sec:variational} If $A$ is a real symmetric or complex Hermitian matrix, its \textit{Rayleigh quotient} at a non-zero vector $\mathbf{v}$ is defined as
\[
R_A(\mathbf{v})=\frac{\langle \mathbf{v},A\mathbf{v}\rangle}{\langle \mathbf{v},\mathbf{v}\rangle}.
\]

The \textit{variational principle} (also referred to as \textit{min-max theorem}, or \textit{Courant-Fischer theorem}, or \textit{Rayleigh-Ritz theorem}) gives that
\[
\lambda_{\min}(A)\leq R_A(\mathbf{v})\leq \lambda_{\max}(A)\text{ for any }\mathbf{v}\neq\mathbf{0}.\]

\subsection{Basic notation} 

Throughout the paper, we use $G$ to denote a finite group, $[n]$ to denote the set of positive integers from $1$ to $n$, $X$ a finite set, and $\ell^2(X;\mathbb{F})=\{f:X\to\mathbb{F}\}$ the space of $\mathbb{F}$-valued functions on $X$ (most of the time, we only need $\mathbb{F}=\mathbb{R}$). The space $\ell^2(X;\mathbb{F})$ is endowed with the inner product 
\[
\langle f,g\rangle=\sum_{x\in X}\overline{f(x)}g(x).
\]

For self-adjoint operators $P_1$ and $P_2$, the notation $P_1\succeq P_2$ means that $P_1-P_2$ is positive semi-definite. In other words, $\langle x,P_1x\rangle \geq \langle x,P_2x\rangle$ for all $x$. (This is the so-called \textit{Loewner order}.)

\begin{definition}
Let $G$ act on a finite set $X$, and let $S\subseteq G$ be a finite generating set.  The \textit{Schreier graph} $\Sch(G,X,S)$ is the generator-labeled multigraph (so loops and parallel edges are allowed) with vertex set $X$ and, for every $x\in X$ and $s\in S$, one edge from $x$ to $s\cdot x$ labeled by $s$. If $S=S^{-1}$, its adjacency matrix is symmetric, and the edges corresponding to an element and its inverse may be identified to regard the graph as undirected. By convention, each generator contributes $1$ to the degree, even if it fixes a vertex.
\end{definition}

The action $G\curvearrowright X$ gives rise to a permutation representation $\rho_X:G\longrightarrow U(\ell^2(X;\mathbb{C}))$ defined by $\bigl(\rho_X(g)f\bigr)(x) =f(g^{-1}\cdot x)$. The \textit{adjacency operator} $A_{X,S}$ of the Schreier graph $\Sch(G,X,S)$ is
\[
   (A_{X,S}f)(x)= \sum_{s\in S}f(s^{-1}\cdot x)=\sum_{s\in S}(\rho_X(s)f)(x).
\]
Thus
\[
   (A_{X,S}f)(x)
   =
   \sum_{s\in S}f(s^{-1}\cdot x).
\]
The \textit{unnormalized Schreier Laplacian} is $L_{X,S}=|S|I-A_{X,S}$.

\begin{example}
    Let $G=S_3$ act naturally on $X=[3]$. Let $a,b$ denote the transpositions $(1 \;2)$ and $(2\;3)$, respectively, and $S=\{a,b\}$. Then $\Cay(G,S)$ is a $6$-cycle and the Schreier graph $\Sch(G,X,S)$ has $3$ vertices and the following edges: $a$ joins vertices $1$ and $2$, and $b$ joins vertices $2$ and $3$. Moreover, $a$ fixes vertex $3$, and $b$ fixes vertex $1$. The resulting $\Sch(G,X,S)$ is depicted in Figure~\ref{fig:Sch_ex}.

\begin{figure}[h]
\begin{center}
    \begin{tikzpicture}[scale=0.9,
    vertex/.style={circle, draw, minimum size=0.9cm, inner sep=0pt},
    edge label/.style={font=\large}
]

\node[vertex] (v1) at (0,0) {1};
\node[vertex] (v2) at (4,0) {2};
\node[vertex] (v3) at (8,0) {3};

\draw (v1) -- node[edge label, above] {$a$} (v2);
\draw (v2) -- node[edge label, above] {$b$} (v3);

\draw[->] (v1) to[out=140,in=40,looseness=6] node[edge label, above] {$b$} (v1);
\draw[->] (v3) to[out=140,in=40,looseness=6] node[edge label, above] {$a$} (v3);

\end{tikzpicture}
\caption{The Schreier graph $\Sch(S_3,[3], S)$ with $a=(1\;2),b=(2\;3)$ and $S=\{a,b\}$.}
\label{fig:Sch_ex}
\end{center}
\end{figure}
\end{example}

 \subsection{Representation theory notation}\label{sec:rep} This paper is written for an audience familiar with the foundations of spectral graph theory and random walks, but not necessarily with representation theory. In the interest of readability, we now describe the representation theory notation that is used throughout the paper. For a general reference, please see~\cite{James_Liebeck_2001}. While the notation is standard in representation theory, we follow Cesi~\cite{Cesi2009, Cesi2016, Cesi20} in the exposition. Let $\pi:G\to U(V_\pi)$ be a unitary, finite-dimensional complex representation, where $V_\pi$ is a complex vector space with some inner product and $U(V_\pi)$ its unitary group. If $H\leq G$, then we use the following notation. 

\begin{itemize}
    \item $\Irr(G)$ denotes the set of equivalence classes of irreducible, finite-dimensional complex representations of $G$.
    \item $\pi\vert_H=\Res^G_H\pi$ denotes the restriction of $\pi$ to the subgroup $H$.
    \item $V_\pi^H$ denotes the $H$-fixed space $\{v\in V_\pi:\pi(h)v=v\text{ for all }h\in H\}$.
    \item $\mathbf{1}_H$ denotes the one-dimensional trivial representation of $H$.
    \item $\Ind_H^G\mathbf{1}_H$ denotes the induced representation $\rho_{G/H}$ of $G$ from $\mathbf{1}_H$. More specifically, this is the permutation representation of $G/H$.
    \item $\mult_\pi(\Pi)$ denotes the multiplicity of the irreducible representation $\pi$ in $\Pi$.
\end{itemize}

Notice that \textbf{Frobenius reciprocity} yields that  
\[
\Hom_G\bigl(\Ind_H^G\mathbf{1}_H,\pi \bigr)\cong \Hom_H\bigl(\mathbf{1}_H,\pi\vert_H\bigr).
\] Moreover, since $\Hom_H\bigl(\mathbf{1}_H,\pi\vert_H\bigr)\cong V_\pi^H$, by taking dimensions, it follows that $\mult_\pi(\Ind_H^G\mathbf{1}_H)=\dim V_\pi^H$. Since $\Ind_H^G\mathbf{1}_H$ can be realized as the permutation representation of $G$ on the coset space $G/H$, it follows that $\pi$ occurs in $\Ind_H^G\mathbf{1}_H$ if and only if $\pi\vert_H$ contains the trivial representation $\mathbf{1}_H$.

\subsection{Representation Laplacian}
We use $\mathbb{F}G$ to denote the \textit{group algebra of $G$}. That is, the $\mathbb{F}$-vector space of formal sums of the form 
$\displaystyle{
\sum_{g\in G}w_gg,
}$ with multiplication extended linearly by the group operation. Moreover, the \textit{support} $\supp(w)$ of $w$ is defined as $\supp(w)=\{g\in G:w_g\neq0\}$.  Furthermore, if $w_g\in\mathbb{R}_{\geq0}$ for all $g$, then $w$ is called \textit{positive}. In addition, if $w=\sum_{g\in G}w_gg\in \mathbb{F}G$, then $w^*$ denotes the formal sum $\displaystyle{\sum_{g\in G}\overline{w_g}g^{-1}}$. We say that $w\in \mathbb{F}G$ is \textit{symmetric} if $w=w^*$. In $\mathbb{R}G$, $w$ being symmetric is equivalent to $w_g=w_{g^{-1}}$ for all $g\in G$.

If $w=\sum_{g\in G}w_gg$, then the \textit{representation Laplacian} $\Delta_G(w,\pi)$ is defined as
\[
\Delta_G(w,\pi)=\sum_{g\in G}w_g(I_{V_\pi}-\pi(g)).
\] Here $I_{V_\pi}$ denotes the identity operator on the representation space $V_\pi$. 

Let $\mathbb FG^{(s)}=\{w\in\mathbb{F}G:w=w^*\}$, and let
\[
   \mathbb R_+G^{(s)}
   :=
   \left\{
      \sum_{g\in G}w_g g:
      w_g\in\mathbb R,\;
      w_g\ge0,\;
      w_g=w_{g^{-1}}
   \right\}
\]
denote the set of positive, symmetric elements of the real group algebra. If $w\in\mathbb RG^{(s)}$, then $\Delta_G(w,\pi)$ is self-adjoint (cf. Cesi~\cite[Proposition 2.1]{Cesi2016}). If $w\in\mathbb R_+G^{(s)}$, then the following identity is standard. In the representation-Laplacian setting, it appears, for example, as
Equation~(2.2) in the proof of Cesi~\cite[Proposition~2.1]{Cesi2016}
\begin{equation}\label{eq:energy}
\langle\Delta_G(w,\pi)v,v\rangle=\frac{1}{2}\sum_{g\in G}w_g\|\pi(g)v-v\|^2,
\end{equation} from which it follows that $\Delta_G(w,\pi)$ is positive semi-definite, i.e., $\Delta_G(w,\pi)\succeq 0$. Equation~(\ref{eq:energy}) can be seen as the representation-theoretic analogue of the usual Dirichlet-form identity for reversible Markov chains
\cite[Lemma~13.6]{Levin2017}.
 
If $w$ is symmetric and $\pi$ is unitary, then
$\Delta_G(w,\pi)$ is self-adjoint.  We therefore list its
eigenvalues, repeated according to multiplicity, in non-decreasing
order:
\[
   \nu_1\bigl(\Delta_G(w,\pi)\bigr)
   \le
   \nu_2\bigl(\Delta_G(w,\pi)\bigr)
   \le\cdots\le
   \nu_{d_\pi}\bigl(\Delta_G(w,\pi)\bigr),
\]
where $d_\pi=\dim V_\pi$. If $w$ is positive symmetric, then $ 0\leq \nu_1\bigl(\Delta_G(w,\pi)\bigr)$. If, in addition, $\supp(w)$ generates $G$, then $\ker\Delta_G(w,\pi)=V_\pi^G$. Hence, zero is an eigenvalue precisely when $V_\pi^G\neq\{0\}$, and its multiplicity is $\dim V_\pi^G$.

Notice that the representation Laplacian respects direct sums.  Indeed, if
$\pi=\pi_1\oplus\pi_2$, then $\pi(g)=\pi_1(g)\oplus\pi_2(g)$ for all $g\in G$. Therefore,
$\Delta_G(w,\pi)=\Delta_G(w,\pi_1)\oplus\Delta_G(w,\pi_2)$.
More generally, if $\displaystyle{\pi\cong\bigoplus_\tau m_\tau\,\tau}$, then
$\displaystyle{\Delta_G(w,\pi) \cong \bigoplus_\tau \left(I_{m_\tau}\otimes\Delta_G(w,\tau)\right)}$.

\subsubsection{Spectral gap of the representation Laplacian} Following Cesi, an eigenvalue $\lambda$ of $\Delta_G(w,\pi)$ is called \textit{trivial} if its eigenspace is contained in $V_\pi^G$; otherwise it is called \textit{nontrivial}.  Every trivial eigenvalue is necessarily zero. The \textit{spectral gap} of the pair $(w,\pi)$, with $w\in\mathbb{R}_+G^{(s)}$, is defined as 
\[
\psi_G(w,\pi)=\min\bigl\{\lambda\in\Spec(\Delta_G(w,\pi)):\lambda\text{ is nontrivial}\bigr\},
\] with the convention that $\min\emptyset=+\infty$. Then the \textit{spectral gap} of $w$ is the minimum $\psi_G(w,\pi)$ over all irreducible representations $\pi\in\Irr(G)$. Namely,
\[
\psi_G(w)=\min\{\psi_G(w,\pi):\pi\in\Irr(G)\}.
\]

\subsubsection{Spectral gap of the representation Laplacian and spectral gap of the Laplacian}
    
Let $\rho_G$ denote the left regular representation of $G$ on $\ell^2(G;\mathbb{C})$. 

The weighted Cayley adjacency operator is given by
\[
   (A_wf)(x)=\sum_{g\in G}w_gf(g^{-1}\cdot x)=\sum_{g\in G}w_g(\rho_G(g)f)(x),
\]
and hence its unnormalized Laplacian is
\[
   L_w
   =
   d_wI-A_w
   =
   \sum_{g\in G}w_g\bigl(I-\rho_G(g)\bigr)
   =
   \Delta_G(w,\rho_G),
\] where $d_w=\sum_{g\in G}w_g$. Thus the representation Laplacian with the regular representation is exactly the weighted Cayley-graph Laplacian. In particular, if $Q$ is a symmetric generating set of $G$, and $w=\sum_{q\in Q}q\in\mathbb{R}_+G^{(s)}$, then $A_w$ and $L_w$ are the unnormalized adjacency operator and unnormalized Laplacian of $\mathcal{G}=\Cay(G,Q)$, and so $L_w=\Delta_G(w,\rho_G)=d_wI-A_\mathcal{G}=\mathcal{L}_\mathcal{G}$ in this case. Similarly, if $H\leq G$ and $\mathbf{1}_H$ is the trivial representation of $H$, then $\Delta_G(w,\Ind^G_H\mathbf{1}_H)$ is the unnormalized Laplacian of $\Sch(G,G/H,Q)$.

The \textit{unnormalized spectral gap} of $L_w$ is defined as $d_w-\lambda_2(A_w)$. The \textit{normalized spectral gap} is obtained by dividing by $d_w$.

Notice that the identity $\Delta_G(w,\rho_G)=L_w$ holds for every positive symmetric group-algebra element $w$. Furthermore, since the left regular representation contains every irreducible
representation of $G$ and the representation Laplacian respects direct sums, one has
\[
   \psi_G(w)=\psi_G(w,\rho_G).
\]

Since $w$ is symmetric and nonnegative, $A_w$ is self-adjoint and $L_w$ is positive semi-definite. We can therefore order their eigenvalues as follows

\[ 
d_w=\lambda_1(A_w)\geq \lambda_2(A_w)\geq\cdots
 \quad\text{ and }\quad
0=\nu_1(L_w)\leq \nu_2(L_w)\leq\cdots
\]

If one additionally assumes $\langle\operatorname{supp}(w)\rangle=G$, then the weighted Cayley graph is connected. Thus, 
$\ker L_w$ consists precisely of the constant functions and so $0=\nu_1(L_w)<\nu_2(L_w)$. That is, $\nu_2(L_w)$ is the first positive Laplacian eigenvalue. So if $\supp(w)$ generates $G$, then 
\[
\psi_G(w)=\psi_G(w,\rho_G)=\nu_2(L_w)=d_w-\lambda_2(A_w).
\]

Although $\psi_G(w)$ could equivalently be defined simply using the left regular representation, the formulation as a minimum over irreducible representations is useful for the semi-recursive argument, where the irreducible representations are separated according to whether their restriction to a subgroup $H$ contains the trivial representation. We make this explicit in the next section. 

\section{Cesi's semi-recursive inequality and its Schreier form}\label{sec:Cesi}

Following Cesi~\cite[Equation (3.1)]{Cesi2016}, let
$
\Gamma(G)=\bigl\{w\in \mathbb{R}G^{(s)}:\Delta_G(w,\pi)\succeq 0 \text{ for all }\pi\in\Irr(G)\bigr\}.
$ As a consequence of Equation (\ref{eq:energy}), $\mathbb{R}_+G^{(s)}\subseteq\Gamma(G)$.

The following is one of the main tools used to prove our results. We include a short proof in the present notation.

\begin{proposition}[Cesi~\cite{Cesi2016}, Proposition 3.2]\label{prop:cesi} Let $H\leq G$ be finite groups. Let $w\in\mathbb{R}_+G^{(s)}$ and $z\in \mathbb{R}_+H^{(s)}$ with $w-z\in\Gamma(G)$. Then
\[
\psi_G(w)\geq  \min\left\{
      \psi_H(z),\,
      \min_{\substack{\pi\in\operatorname{Irr}(G)\\
                      \mathbf{1}_H\subseteq \pi|_H}}
      \psi_G(w,\pi)
   \right\}.
\]
\end{proposition}
\begin{proof}
    Let $\pi\in\Irr(G)$. Recall that if $v\in V_\pi$ satisfies $\pi(h)v=v$ for every $h\in H$, then $v$ is called a \textit{$H$-fixed vector}. There are two possibilities: (i) $\pi$ has a non-zero $H$-fixed vector, or (ii) it does not. If $\pi$ has a non-zero fixed $H$-vector, then $\pi$ is accounted for in 
    \[\min_{\substack{\pi\in\operatorname{Irr}(G)\\
                      \mathbf{1}_H\subseteq \pi|_H}}
      \psi_G(w,\pi)
      \] since $\mathbf{1}_H\subseteq \pi|_H$ is equivalent to $V_\pi^H\neq\{0\}$. For (ii), if $\pi$ has no non-zero $H$-fixed vector (so $V_\pi^H=\{0\}$), by Maschke's theorem, the restricted representation is completely reducible:
\[
   \pi|_H
   \cong
   \bigoplus_{\tau\in\operatorname{Irr}(H)}
   m_\tau\,\tau.
\]
The multiplicity of the trivial representation $\mathbf 1_H$ in $\pi|_H$ is $\dim V_\pi^H$.  Hence $m_{\mathbf 1_H}=0$, and
\[
   \pi|_H
   \cong
   \bigoplus_{\substack{\tau\in\operatorname{Irr}(H)\\
                        \tau\not\cong\mathbf 1_H}}
   m_\tau\,\tau.
\]
The representation Laplacian respects direct sums, so
$\displaystyle{
   \Delta_H(z,\pi|_H)
   \cong
   \bigoplus_{\substack{\tau\in\operatorname{Irr}(H)\\
                        \tau\not\cong\mathbf 1_H}}
   \left(
      I_{m_\tau}\otimes\Delta_H(z,\tau)
   \right)}$.
 Hence $\psi_H(z,\tau)= \lambda_{\min}\bigl(\Delta_H(z,\tau)\bigr)$. Now, since $\displaystyle{\psi_H(z) = \min_{\sigma\in\operatorname{Irr}(H)} \psi_H(z,\sigma) \leq \psi_H(z,\tau)}$, it follows that $\lambda_{\min}\bigl(\Delta_H(z,\tau)\bigr) \geq \psi_H(z)$. Equivalently, $\Delta_H(z,\tau) \succeq \psi_H(z)I_{V_\tau}$. Moreover, 
\[
   \Delta_G(w,\pi)
   =
   \Delta_H(z,\pi|_H)+\Delta_G(w-z,\pi)
   \succeq
   \psi_H(z)I_{V_\pi}, 
\] since $w-z\in\Gamma(G)$. Thus, every irreducible representation whose restriction to $H$ does not contain the trivial representation has spectral gap at least $\psi_H(z)$. The remaining irreducible representations are precisely those satisfying $\mathbf{1}_H\subseteq\pi\vert_H$. Taking the minimum over all $\pi\in\Irr(G)$ proves the proposition.
\end{proof}

As a corollary, we obtain the following form of Cesi's semi-recursive inequality, written in the language of Schreier graphs.

\begin{corollary}[Schreier form of Proposition~\ref{prop:cesi}]\label{cor:cesi}
    With the same assumptions as in Proposition~\ref{prop:cesi}, let $\beta_{G/H}(w)=\psi_G(w,\rho_{G/H})$. Assume further that $\langle\supp(w)\rangle=G$ and $\langle\supp(z) \rangle=H$. Then,
    \begin{equation}\label{eq:cesi-schreier}
    \min\{\psi_H(z),\beta_{G/H}(w)\}\leq \psi_G(w)\leq \beta_{G/H}(w).
    \end{equation}
\end{corollary}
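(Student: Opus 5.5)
Both inequalities in \eqref{eq:cesi-schreier} come from decomposing $\rho_{G/H}=\Ind_H^G\mathbf{1}_H$ into irreducibles and comparing with Proposition~\ref{prop:cesi}.

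\textbf{Decomposition.} By Frobenius reciprocity, as recalled in Section~\ref{sec:rep},
\[
\rho_{G/H}\cong\bigoplus_{\pi\in\Irr(G)}(\dim V_\pi^H)\,\pi .
\]
So the irreducibles occurring in $\rho_{G/H}$ are exactly those $\pi$ with $\mathbf{1}_H\subseteq\pi|_H$. Since the representation Laplacian respects direct sums, $\Delta_G(w,\rho_{G/H})$ is unitarily equivalent to $\bigoplus_\pi I_{\dim V_\pi^H}\otimes\Delta_G(w,\pi)$, and its spectrum is the union of the spectra of these $\Delta_G(w,\pi)$.

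\textbf{Nontriviality is preserved.} An eigenvalue of $\Delta_G(w,\rho_{G/H})$ is nontrivial exactly when it is nontrivial in some summand $\pi$. The $G$-fixed subspace splits along the decomposition, so the fixed space of $\rho_{G/H}$ is the sum of the copies of the $V_\pi^G$. Also, because $\langle\supp(w)\rangle=G$, each kernel satisfies $\ker\Delta_G(w,\pi)=V_\pi^G$. Hence only zero eigenvalues can be trivial, and they are trivial in both descriptions. Every nonzero eigenvalue is nontrivial in both. This gives
\[
\beta_{G/H}(w)=\min_{\substack{\pi\in\Irr(G)\\ \mathbf{1}_H\subseteq\pi|_H}}\psi_G(w,\pi).
\]
This identification is the only step that needs care. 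In particular, the generation hypothesis guarantees that trivial eigenvalues are exactly the zeros coming from the copy of $\mathbf{1}_G$.

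\textbf{Lower bound.} Substituting this identity into Proposition~\ref{prop:cesi} gives $\psi_G(w)\geq\min\{\psi_H(z),\beta_{G/H}(w)\}$ immediately.

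\textbf{Upper bound.} We have $\psi_G(w)=\min_{\pi\in\Irr(G)}\psi_G(w,\pi)$, and this minimum runs over a set containing every $\pi$ with $\mathbf{1}_H\subseteq\pi|_H$. Therefore $\psi_G(w)\le\beta_{G/H}(w)$.

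If $\rho_{G/H}$ has no nontrivial eigenvalue (for instance when $H=G$), then $\beta_{G/H}(w)=+\infty$ by the convention $\min\emptyset=+\infty$, and both inequalities still hold. The hypothesis $\langle\supp(z)\rangle=H$ is not needed for the argument. It only ensures that $\psi_H(z)$ is the usual positive gap of $\Cay(H,\supp(z))$, consistent with the Schreier interpretation.
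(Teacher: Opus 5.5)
Your proposal is correct and follows essentially the same route as the paper. Both arguments use Frobenius reciprocity to identify $\beta_{G/H}(w)$ with $\min\{\psi_G(w,\pi) : \mathbf{1}_H\subseteq\pi|_H\}$, get the upper bound by comparing minima, and quote Proposition~\ref{prop:cesi} for the lower bound. Your explicit check that the trivial/nontrivial classification of eigenvalues splits compatibly across the direct-sum decomposition is a detail the paper covers only with ``the representation Laplacian respects direct sums,'' and it is a welcome addition.
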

\begin{proof}
    Let $\rho_X$ denote the permutation representation of $G$ on $\ell^2(X;\mathbb{C})$ given by
   $\bigl(\rho_X(g)f\bigr)(x)= f(g^{-1}\cdot x)$, with $f\in\ell^2(X;\mathbb{C})$. The \textit{weighted Schreier graph} $\Sch(G,X,w)$ associated with the action $G\curvearrowright X$ and weights $(w)_{g\in G}$ is the graph with vertex set $X$ and adjacency operator given by
\[
(A_{X,w}f)(x)=\sum_{g\in G}w_g f(g^{-1}\cdot x)=\sum_{g\in G}w_g(\rho_X(g)f)(x). 
\]

Let $S=\supp(w)$ and $d_w=\sum_{g\in G}w_g$. Then, the unnormalized Laplacian of $\Sch(G,X,w)$ is $L_{X,w}= d_wI-A_{X,w}$. By definition of the representation Laplacian,
\[
   L_{X,w} = d_wI-A_{X,w}=\sum_{s\in S}w_s(I-\rho_X(s))=\sum_{g\in G}w_g(I-\rho_X(g))=\Delta_G(w,\rho_X).
\]

Now let $X=G/H$, and let $\rho_{G/H}$ denote the corresponding coset permutation representation.  The representation $\rho_{G/H}$ is naturally isomorphic to $\Ind_H^G\mathbf 1_H$. As a consequence of Frobenious reciprocity (see Section~\ref{sec:rep}), $\mult_\pi(\rho_{G/H})=\dim V_\pi^H$ for every $\pi\in\Irr(G)$. Thus, $\rho_{G/H}\cong\bigoplus_{\pi\in\Irr(G)} (\dim V^H_{\pi})\pi$. Therefore $\pi$ occurs in $\rho_{G/H}$ if and only if $V_\pi^H\neq\{0\}$ if and only if $\mathbf 1_H\subseteq\pi\vert_H$. Since the representation Laplacian respects direct sums, 
\[
   \psi_G(w)=\min_{\pi\in\Irr(G)}\psi_G(w,\pi)\leq\min_{\substack{\pi\in\operatorname{Irr}(G)\\\mathbf 1_H\subseteq\pi\vert_H}}
   \psi_G(w,\pi)=\beta_{G/H}(w),
\]
which gives the upper half of~(\ref{eq:cesi-schreier}). In particular, $\beta_{G/H}(w)$ is the first positive eigenvalue of the weighted Schreier Laplacian on $G/H$.

For the lower part of the inequality~(\ref{eq:cesi-schreier}), Cesi's semi-recursive inequality Proposition~\ref{prop:cesi} gives $\psi_G(w)\geq \min\{\psi_H(z),\beta_{G/H}(w)\}$.
\end{proof}

\begin{remark}
Cesi's \cite[Proposition~3.2]{Cesi2016} is slightly more general: it does not require the supports of $w$ and $z$ to generate their respective groups and uses his notion of the smallest nontrivial eigenvalue.  We impose the generating assumptions because all Cayley and Schreier graphs in the present application are connected. In our setting the relevant gaps are simply the first positive eigenvalues of the Laplacian.
\end{remark}

\begin{example} To illustrate the corollary, let $G=S_3$ and  $H=\langle(1\;2)\rangle\cong S_2$. Furthermore, let $z=(1\;2)\in\mathbb R_+H^{(s)}$ and $w=(1\;2)+(2\;3)\in\mathbb R_+G^{(s)}$. So, $w-z=(2\;3)\in \Gamma(G)$. Notice that this setting satisfies the hypothesis of Corollary~\ref{cor:cesi}.

The Cayley graph $\Cay(H,\{(1\;2)\})$ is an edge and thus has $\psi_H(z)=2$.  The Schreier adjacency operator is 
\[
   A_{G/H,w}=
   \begin{pmatrix}
      1&1&0\\
      1&0&1\\
      0&1&1
   \end{pmatrix}.
\]
Therefore, $\displaystyle{L_{G/H,w}=2I-A_{G/H,w}=
   \begin{pmatrix}
      1&-1&0\\
      -1&2&-1\\
      0&-1&1
   \end{pmatrix}}$. The eigenvalues of this unnormalized Laplacian are $0,1,3$, so $\beta_{G/H}(w)=1$. Since $\min\{\psi_H(z),\beta_{G/H}(w)\}=1$, the spectral gap of the unnormalized Laplacian of $\Cay(S_3,\{(1\;2),(2\;3)\})$ is 1. This can easily be verified to be true since $\Cay(S_3,\{(1\;2),(2\;3)\})$ is a hexagon. 
\end{example}

\section{Proof of the Main Theorem}\label{sec:proof}

In this section, we prove our Main Theorem. 

There are some special cases that one needs to handle. If $m=1$, then it is known that the adjacency spectral gap of the pancake graph is 1. This is Cesi's main result in~\cite{Cesi2009} (see also Chung and Tobin~\cite{ChungTobin2017} who prove a more general result). Therefore, we assume that $m\geq2$. If $m=2$, then $r^+_i=r^{-}_i$ for every $1\leq i\leq n$, and so the corresponding Cayley graph is $n$-regular. If $m>2$, then the Cayley graph is $2n$-regular. Since this is the only distinction in the argument, we shall simply use $q_m$ to distinguish between the degree of the graph when $m=2$ and $m>2$
\[
q_m=\begin{cases}
    1&\text{ if }m=2\\
    2&\text{ if }m>2
\end{cases}.
\] So the corresponding Cayley graph $\Cay\bigl(S(m,n),R(m,n)\bigr)$ is $q_mn$-regular. 

For readability, we divide the proof of the Main Theorem into four steps. We first present a brief overview of each of the steps. 

\begin{description}
   \item[Step 1: Schreier graph formulation.] The coset space $S(m,n)/S(m,n-1)$ is naturally identified with $X_{m,n}=\mathbb Z_m\times[n]$. To see this, select a distinguished symbol $n$ and let $x_0=(0,n)\in X_{m,n}$ (meaning color zero and position $n$). The embedded subgroup $S(m,n-1)$ is exactly the stabilizer of $x_0$. Therefore, the map $gS(m,n-1)\mapsto g\cdot x_0$ is an $S(m,n)$-equivariant bijection. The image is the orbit of $x_0$, which consists of all $mn$ color-position pairs. In particular, the Schreier graph records only the color and position of one distinguished element from $[n]$ and has $mn$ vertices. We can then use the inequality from Corollary~\ref{cor:cesi}.
   \item[Step 2: Decomposition of the Schreier graph adjacency operator.] We decompose the Schreier adjacency operator utilizing a discrete Fourier transform. Then the Schreier-gap calculation reduces to estimating the eigenvalues of $m$ matrices of size $n\times n$, rather than those of a single $mn\times mn$ matrix. 
   \item[Step 3: Bounding the spectral gap of the Schreier graph.] This shall be done in three cases depending on $r\in\mathbb{Z}_m$. The case $r=0$, the case where $r\neq0$ does not have order $2$, and the case where $r\neq0$ has order $2$. 
   \item[Step 4: Induction.] The final proof uses induction with the bounds found in the previous step and the inequality from Corollary~\ref{cor:cesi}. 
\end{description}

\subsection{Step 1} Notice that we can embed $S(m,n-1)$ into $S(m,n)$ by utilizing the following identification:
\[
S(m,n-1)=\{(a,\sigma)\in S(m,n):a_n=0,\sigma(n)=n\}.
\]

Moreover, let 
\[
w_{m,n-1}=\sum_{r\in R(m,n-1)}r\in\mathbb{R}_+S(m,n-1)^{(s)}\quad \text{ and }\quad w_{m,n}=\sum_{r\in R(m,n)}r\in\mathbb{R}_+S(m,n)^{(s)}.
\]
Notice that $w_{m,n}-w_{m,n-1}\in\Gamma(S(m,n))$. Indeed,
\[
w_{m,n}-w_{m,n-1}=\begin{cases}
    r^+_n &\text{ if }m=2,\\
    r^+_n+r^-_n&\text{ if }m\geq3.
\end{cases}
\]

If $m=2$, the new generator $r^+_n$ is an involution, so it is positive symmetric. Moreover, if $m\geq3$, then $r^-_n=(r_n^+)^{-1}$, so $r^+_n+r^-_n$ is positive symmetric. Therefore, $w_{m,n}-w_{m,n-1}\in\mathbb{R}_+S(m,n)^{(s)}\subseteq\Gamma(S(m,n))$.

Let $\psi_{m,n-1}=\psi_{H}(z)$, $\psi_{m,n}=\psi_G(w)$, and $\beta_{m,n}=\beta_{G/H}(w)$. Applying Corollary~\ref{cor:cesi} with $G=S(m,n)$, $H=S(m,n-1)$, $w=w_{m,n}$, and $z=w_{m,n-1}$, we obtain
\[
\min\{\psi_{m,n-1},\beta_{m,n}\}\leq\psi_{m,n}\leq\beta_{m,n}.
\]
We now need to estimate $\beta_{m,n}$.

Let $\Sch(m,n)=\Sch(G,X,S)$, where $G=S(m,n)$, $X=S(m,n)/S(m,n-1)$, $S=R(m,n)$; furthermore, let $X_{m,n}=\mathbb{Z}_m\times [n]$.

\subsection{Step 2} Let us denote by $\mathcal{NA}^{\mathrm{Sch}}_{m,n}$ the normalized Schreier graph adjacency operator of $\Sch(m,n)$. Let $(c,p)\in X_{m,n}$, where the first component $c$ is referred to as the \textit{color} and the second component $p$ is referred to as the \textit{position}. Let
\[
E_m=\begin{cases}\{+\}&\text{ if }m=2\\
\{-,+\}&\text{ if }m\geq3
\end{cases}.
\]

If $\varepsilon\in E_m$, $r^{\varepsilon}_i\in R(m,n)$,  and $x=(a_x,p_x)\in V(\Sch(m,n))$ then $a_x\in\mathbb{Z}_m$ and $p_x\in [n]$. The action is given by $r^{\varepsilon}_i\cdot (a_x,p_x)=(a_x+\varepsilon1,i+1-p_x)$ if $p_x\leq i$ and $r^{\varepsilon}_i\cdot (a_x,p_x)=(a_x,p_x)$ if $p_x>i$. The addition is performed modulo $m$. 

By definition, the normalized adjacency operator $\mathcal{NA}^{\mathrm{Sch}}_{m,n}$ on $\ell^2(X_{m,n};\mathbb{C})$ satisfies $\mathcal{NA}^{\mathrm{Sch}}_{m,n}(x,y)=\frac{1}{q_mn}|\{s\in R(m,n):s\cdot x=y\}|$, where $s\cdot x$ is as described above. Therefore, since $R(m,n)^{-1}=R(m,n)$,
\[
(\mathcal{NA}^{\mathrm{Sch}}_{m,n}F)(a,p)=\frac{1}{q_mn}\sum_{s\in R(m,n)}F(s^{-1}\cdot (a,p))=\frac{1}{q_mn}\sum_{s\in R(m,n)}F(s\cdot (a,p)),
\] and the normalized Laplacian of $\Sch(m,n)$ is then
$\displaystyle{
    \mathcal{NL}^{\mathrm{Sch}}_{m,n}=I-\mathcal{NA}^{\mathrm{Sch}}_{m,n}
}$, and so $\beta_{m,n}=(q_mn)\gamma(\Sch(m,n))=(q_mn)\nu_2\bigl(\mathcal{NL}^{\mathrm{Sch}}_{m,n}\bigr)$. Equivalently,
\begin{equation}\label{eq:beta_mn_in_terms_of_A}
    \beta_{m,n}=q_mn\left(1-\lambda_2\left(\mathcal{NA}^{\mathrm{Sch}}_{m,n}\right)\right)
\end{equation}

First, let us describe the operator $\mathcal{NA}^{\mathrm{Sch}}_{m,n}$.

\begin{proposition}\label{prop:A_in_terms_of_F}
    The operator $\mathcal{NA}^{\mathrm{Sch}}_{m,n}$ satisfies
    \[
    (\mathcal{NA}^{\mathrm{Sch}}_{m,n}F)(a,p)=\frac{1}{q_mn}\left(q_m(p-1)F(a,p)+\sum_{\varepsilon\in E_m}\sum_{q=1}^{n+1-p}F(a+\varepsilon1,q)\right),
    \] where the addition is done modulo $m$.
\end{proposition}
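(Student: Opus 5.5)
We start from the defining formula
\[
(\mathcal{NA}^{\mathrm{Sch}}_{m,n}F)(a,p)=\frac{1}{q_mn}\sum_{s\in R(m,n)}F(s\cdot(a,p)),
\]
which Step~2 derived using $R(m,n)^{-1}=R(m,n)$. The plan is to evaluate this sum by splitting the generators according to how they act on the fixed vertex $(a,p)$. We index the generators as $r_i^\varepsilon$ with $1\le i\le n$ and $\varepsilon\in E_m$, so there are exactly $q_mn$ of them. For $m=2$ this uses $r_i^+=r_i^-$, so that only $\varepsilon=+$ is counted.

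By the description of the action, the generators fall into two classes.

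\textbf{Fixing generators.} If $i<p$, that is $p>i$, then $r_i^\varepsilon$ fixes $(a,p)$. There are $p-1$ such indices $i\in\{1,\dots,p-1\}$, each paired with the $q_m$ choices of $\varepsilon\in E_m$. Together these contribute $q_m(p-1)F(a,p)$. Note that $r_1^\varepsilon$ with $p=1$ belongs to the other class, which is consistent because $1<p$ fails.

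\textbf{Moving generators.} If $i\ge p$, then $r_i^\varepsilon\cdot(a,p)=(a+\varepsilon1,\,i+1-p)$. For fixed $\varepsilon$, as $i$ ranges over $p,\dots,n$, the new position $q=i+1-p$ ranges bijectively over $1,\dots,n+1-p$. These generators therefore contribute $\sum_{\varepsilon\in E_m}\sum_{q=1}^{n+1-p}F(a+\varepsilon1,q)$.

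Adding the two contributions and dividing by $q_mn$ gives the stated formula.

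There is no real obstacle. The only points needing care are the boundary conventions. First, $r_1=e$ but $r_1^\varepsilon$ still shifts the color of position $1$, and it correctly appears as the $q=1$ term when $p=1$. Second, for $m=2$ one must not double count, which is handled by the choice of $E_m$. As a sanity check, the total count of terms is $q_m(p-1)+q_m(n+1-p)=q_mn$, matching the degree.
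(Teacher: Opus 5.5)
Your proposal is correct and matches the paper's proof: you split the generators $r_i^\varepsilon$ into those with $i<p$, which fix $(a,p)$ and contribute $q_m(p-1)F(a,p)$, and those with $i\ge p$, which you reindex by $q=i+1-p$. Your remarks on $r_1^\varepsilon$ and on avoiding double counting when $m=2$ are accurate, and they spell out points the paper leaves implicit.
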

\begin{proof}
    This follows from the definition of the action $s\cdot(a,p)$ with $s\in R(m,n)$ and $(a,p)\in V(\Sch(m,n))=X_{m,n}$. Indeed, the sum 
    \[
    \sum_{s\in R(m,n)}F(s\cdot(a,p))=\sum_{\varepsilon\in E_m}\sum_{i=1}^nF(r^\varepsilon_i\cdot(a,p))
    \] can be decomposed into elements with $i< p$ and $i\geq p$. Therefore,
    \begin{align*}
        (\mathcal{NA}^{\mathrm{Sch}}_{m,n}F)(a,p) &=\frac{1}{q_mn}\sum_{\varepsilon\in E_m}\sum_{i=1}^nF(r^\varepsilon_i\cdot(a,p))\\
        &=\frac{1}{q_mn}\left(q_m(p-1)F(a,p)+\sum_{\varepsilon\in E_m}\sum_{i=p}^{n}F(a+\varepsilon1,i+1-p)\right)\\
        &=\frac{1}{q_mn}\left(q_m(p-1)F(a,p)+\sum_{\varepsilon\in E_m}\sum_{q=1}^{n+1-p}F(a+\varepsilon1,q)\right),
    \end{align*} as desired. 
\end{proof}

Now we shall utilize the discrete Fourier transform in our computations. In particular, we refer to Luong~\cite{Luong2009}. Another classical reference is Terras~\cite{Terras1999}. 

Specializing Luong's formulas from Example 4.1.1, Equations (4.3) and (4.4), we obtain the following discrete Fourier transform for $F\in\ell^2(X_{m,n};\mathbb{C})$
\[
\widehat{F}_r(p)=\frac{1}{\sqrt{m}}\sum_{a=0}^{m-1}\omega^{-ra}F(a,p)\quad\text{ with }\quad r\in\mathbb{Z}_m
\text{ and }\omega=e^{2\pi i/m}.\] 

Its inverse transform is given by
$\displaystyle{
F(a,p)=\frac{1}{\sqrt{m}}\sum_{r=0}^{m-1}\omega^{ra}\widehat{F}_r(p).
}$

Define the \textit{color Fourier transform}
\[
   \mathcal F_{\mathrm{col}}:
   \ell^2(\mathbb Z_m\times[n];\mathbb C)
   \longrightarrow
   \bigoplus_{r=0}^{m-1}\ell^2([n];\mathbb C)
\quad\text{ by }\quad
   \mathcal F_{\mathrm{col}}F
   =
   \bigl(\widehat F_0,\ldots,\widehat F_{m-1}\bigr).
\]

Then, we have the following proposition. 

\begin{proposition}\label{prop:A_in_terms_of_T}  For $r\in\{0,\ldots,m-1\}$, let us write $c_r=\cos\left(2\pi r/m\right)$. Furthermore, let $T_{n,r}$ act on $\ell^2([n];\mathbb C)$ by
\[
   (T_{n,r}f)(p)
   =
   \frac{1}{n}(p-1)f(p)
   +
   \frac{1}{n}c_r
   \sum_{q=1}^{n+1-p}f(q).
\]
Then $\mathcal{F}_{\mathrm{col}}$ is unitary and 
$\displaystyle{
   \mathcal F_{\mathrm{col}}\,
   \mathcal NA_{m,n}^{\mathrm{Sch}}\,
   \mathcal F_{\mathrm{col}}^{-1}
   =
   \bigoplus_{r=0}^{m-1}T_{n,r}.
}$ As a consequence, \[\Spec\bigl(\mathcal{NA}^{\mathrm{Sch}}_{m,n}\bigr)=\bigsqcup_{r=0}^{m-1}\Spec(T_{n,r}),\] where the spectra are regarded as multisets. 
\end{proposition}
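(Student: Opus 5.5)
We prove the two claims in order: first that $\mathcal F_{\mathrm{col}}$ is unitary, then that it carries $\mathcal{NA}^{\mathrm{Sch}}_{m,n}$ to the block-diagonal operator $\bigoplus_r T_{n,r}$.

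\textbf{Unitarity.} For each fixed position $p$, the map $F(\cdot,p)\mapsto(\widehat F_r(p))_{r}$ is the normalized discrete Fourier transform on $\ell^2(\mathbb{Z}_m;\mathbb{C})$. Its matrix $\frac{1}{\sqrt m}(\omega^{-ra})_{r,a}$ is unitary by the orthogonality relation $\sum_{a}\omega^{(r-r')a}=m\,\delta_{r,r'}$. Summing over $p\in[n]$ gives $\|\mathcal F_{\mathrm{col}}F\|^2=\|F\|^2$. The stated inverse formula then shows that $\mathcal F_{\mathrm{col}}$ is a bijective isometry, hence unitary.

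\textbf{Conjugation.} Apply the transform to the formula for $\mathcal{NA}^{\mathrm{Sch}}_{m,n}F$ from Proposition~\ref{prop:A_in_terms_of_F}. The diagonal term $\frac{q_m(p-1)}{q_mn}F(a,p)$ transforms to $\frac{p-1}{n}\widehat F_r(p)$, because it does not mix colors.

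For the shift terms, use translation covariance:
\[
\widehat{F(\cdot+\varepsilon1,q)}_r=\omega^{\varepsilon r}\widehat F_r(q),
\]
which follows by reindexing $a\mapsto a-\varepsilon1$ in the sum over $\mathbb{Z}_m$. Summing over $\varepsilon\in E_m$ then gives two cases.
\begin{itemize}
\item If $m\ge3$, the factor is $\frac{1}{2n}(\omega^r+\omega^{-r})=\frac{1}{n}\cos(2\pi r/m)=\frac{c_r}{n}$.
\item If $m=2$, then $q_m=1$ and $E_2=\{+\}$, so the factor is $\frac{1}{n}\omega^{r}=\frac{(-1)^r}{n}$. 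This equals $\frac{c_r}{n}$ since $\cos(\pi r)=(-1)^r$.
\end{itemize}
In both cases we get
\[
\bigl(\mathcal F_{\mathrm{col}}\mathcal{NA}^{\mathrm{Sch}}_{m,n}F\bigr)_r(p)=\frac{p-1}{n}\widehat F_r(p)+\frac{c_r}{n}\sum_{q=1}^{n+1-p}\widehat F_r(q)=(T_{n,r}\widehat F_r)(p).
\]
This is exactly the claimed identity $\mathcal F_{\mathrm{col}}\,\mathcal{NA}^{\mathrm{Sch}}_{m,n}\,\mathcal F_{\mathrm{col}}^{-1}=\bigoplus_{r}T_{n,r}$.

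\textbf{Spectrum.} Unitary conjugation preserves the spectrum with multiplicities. The spectrum of a direct sum is the multiset union of the spectra of its summands. Together these give $\Spec\bigl(\mathcal{NA}^{\mathrm{Sch}}_{m,n}\bigr)=\bigsqcup_{r}\Spec(T_{n,r})$.

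The only delicate point is the $m=2$ bookkeeping: the normalizing factor $q_m=1$ must combine with the single shift in $E_2=\{+\}$ so that the result is the real coefficient $c_r$. Everything else is routine verification.
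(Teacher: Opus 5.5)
Your proof is correct and takes essentially the paper's route: both diagonalize in the color variable with the normalized DFT on $\mathbb{Z}_m$ and use $\sum_{\varepsilon\in E_m}\omega^{r\varepsilon 1}=q_m c_r$, including the $m=2$ bookkeeping. The only difference is cosmetic: the paper checks $\mathcal{NA}^{\mathrm{Sch}}_{m,n}J_r=J_rT_{n,r}$ on the color modes $V_r=\operatorname{im}(J_r)$, i.e.\ on the inverse-transform side, while you compute translation covariance of the Fourier coefficients directly.
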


\begin{proof}
For each $r\in\{0,\ldots,m-1\}$, define
$\displaystyle{
   J_r:\ell^2([n];\mathbb C)
   \longrightarrow
   \ell^2(\mathbb Z_m\times[n];\mathbb C)
}$
by
\[
   (J_rf)(a,p)
   =
   \frac1{\sqrt m}\omega^{ra}f(p).
\]
Borrowing terminology from Fourier analysis, the image $V_r=\operatorname{im}(J_r)$ is called the \textit{$r$th color mode}. Notice that the subspaces $V_r$ are mutually orthogonal.  Indeed, if
$f,g\in\ell^2([n];\mathbb{C})$, then
\begin{equation*}
   \langle J_rf,J_sg\rangle
   =\frac1m
   \sum_{a=0}^{m-1}\sum_{p=1}^n
   \overline{\omega^{ra}f(p)}
   \,\omega^{sa}g(p)\\
   =\left(
      \frac1m\sum_{a=0}^{m-1}\omega^{(s-r)a}
   \right)
   \langle f,g\rangle\\
   =\delta_{r,s}\langle f,g\rangle, 
\end{equation*}
where $\delta_{r,s}$ is the Kronecker delta and the last equality follows from the orthogonality of the characters of $\mathbb Z_m$:
\[
   \frac1m\sum_{a=0}^{m-1}\omega^{(s-r)a}=
   \begin{cases}
      1,&r=s\\
      0,&r\ne s
   \end{cases}.
\]
Therefore, each $J_r$ is an isometry and the spaces $V_0,\ldots,V_{m-1}$ are mutually orthogonal. Moreover, each $V_r$ has dimension $n$, while $\dim\ell^2(\mathbb Z_m\times[n])=mn$. So, it follows that
\[
   \ell^2(\mathbb Z_m\times[n];\mathbb C)= \bigoplus_{r=0}^{m-1}V_r.
   \]
   
Equivalently, every function
$F\in\ell^2(\mathbb Z_m\times[n])$ has a unique expansion
\[
   F(a,p)=
   \frac1{\sqrt m}
   \sum_{r=0}^{m-1}
   \omega^{ra}\widehat{F}_r(p),
\quad \text{ where }\quad
   \widehat{F}_r(p)=
   \frac1{\sqrt m}
   \sum_{a=0}^{m-1}
   \omega^{-ra}F(a,p)
\]
is the $r$th Fourier coefficient of $F$.

We next determine how
$\mathcal{NA}_{m,n}^{\mathrm{Sch}}$ acts on $V_r$.  Proposition~\ref{prop:A_in_terms_of_F} yields that
\begin{equation}\label{eq:A_in_terms_of_F}
   \bigl(\mathcal NA_{m,n}^{\mathrm{Sch}}F\bigr)(a,p)
   =\frac{1}{q_mn}\left(q_m(p-1)F(a,p) +
   \sum_{\varepsilon\in E_m}
   \sum_{q=1}^{n+1-p}F(a+\varepsilon1,q)\right),
\end{equation}
Take $F=J_rf$, so that
$\displaystyle{
   F(a,p)=\frac1{\sqrt m}\omega^{ra}f(p).
}$
Moreover, for $\varepsilon\in E_m$,
\[
   F(a+\varepsilon1,q) = \frac1{\sqrt m} \omega^{r(a+\varepsilon1)}f(q) = \frac1{\sqrt m} \omega^{ra}\omega^{r\varepsilon1}f(q).
\]
Substituting this into~(\ref{eq:A_in_terms_of_F}) gives
\[
   \bigl(\mathcal NA_{m,n}^{\mathrm{Sch}}J_rf\bigr)(a,p)=
   \frac{1}{q_mn}\left(\frac1{\sqrt m}\omega^{ra}
   q_m(p-1)f(p)
   +
   \frac1{\sqrt m}\omega^{ra}
   \left(
      \sum_{\varepsilon\in E_m}\omega^{r\varepsilon1}
   \right)
   \sum_{q=1}^{n+1-p}f(q)\right).
\]

For $m\geq3$, $E_m=\{-,+\}$ and so
\[
   \sum_{\varepsilon\in E_m}\omega^{r\varepsilon1}=\omega^r+\omega^{-r}=2
   \cos\left(\frac{2\pi r}{m}\right)=q_m\cos\left(\frac{2\pi r}{m}\right).
\]
For $m=2$, $E_2=\{+\}$ and so
\[
   \omega^r=(-1)^r=\cos\left(\frac{2\pi r}{2}\right)=q_m\cos\left(\frac{2\pi r}{2}\right).
\]
Thus, in every case,
$\displaystyle{
   \sum_{\varepsilon\in E_m}\omega^{r\varepsilon1} =
   q_mc_r}.
$

It follows that
\[
\begin{aligned}
   \bigl(\mathcal NA_{m,n}^{\mathrm{Sch}}J_rf\bigr)(a,p)
   &=
   \frac{1}{q_mn}\left(\frac1{\sqrt m}\omega^{ra}
   \left[
      q_m(p-1)f(p)
      +
      q_mc_r
      \sum_{q=1}^{n+1-p}f(q)
   \right]\right)\\
   &=
   \frac1{\sqrt m}\omega^{ra}(T_{n,r}f)(p)\\
   &=
   \bigl(J_rT_{n,r}f\bigr)(a,p).
\end{aligned}
\]
Therefore,
\begin{equation}\label{eq:NA_in_terms_of_JT}
   \mathcal NA_{m,n}^{\mathrm{Sch}}J_r
   =
   J_rT_{n,r}.
\end{equation}

There are two consequences of Equation~(\ref{eq:NA_in_terms_of_JT}). First, $\mathcal NA_{m,n}^{\mathrm{Sch}}$ maps $V_r$ into $V_r$, so every color mode $r$ is invariant.  Second, under the unitary identification $J_r:\ell^2([n])\longrightarrow V_r$, the restriction of the Schreier normalized adjacency operator to $V_r$ is exactly $T_{n,r}$. In other words,
$
   J_r^{-1}
   \left(
      \left.
      \mathcal NA_{m,n}^{\mathrm{Sch}}
      \right\vert_{V_r}
   \right)
   J_r
   =
   T_{n,r}.
$

Finally, define
\[
   J:
   \bigoplus_{r=0}^{m-1}\ell^2([n];\mathbb C)
   \longrightarrow
   \ell^2(\mathbb Z_m\times[n];\mathbb C)
\quad\text{ by }\quad
   J(f_0,\ldots,f_{m-1})
   =
   \sum_{r=0}^{m-1}J_r(f_r).
\]
Equivalently,
$\displaystyle{
   \bigl(J(f_0,\ldots,f_{m-1})\bigr)(a,p)
   =
   \frac1{\sqrt m}
   \sum_{r=0}^{m-1}\omega^{ra}f_r(p)
}$, which is the Fourier inversion formula. Thus $J=\mathcal F_{\mathrm{col}}^{-1}$. Further, the identity $\langle J_rf,J_sg\rangle=\delta_{r,s}\langle f,g\rangle$ shows that $J$ preserves inner products. Indeed, 
    $\displaystyle{
   \langle J\mathbf f,J\mathbf g\rangle= \sum_{r,s=0}^{m-1}\langle J_rf_r,J_sg_s\rangle=
   \sum_{r=0}^{m-1}
   \langle f_r,g_r\rangle
}$. Since
$\displaystyle{
   \ell^2(\mathbb Z_m\times[n];\mathbb C)
   =
   \bigoplus_{r=0}^{m-1}V_r
   }$, it follows that $J$ is unitary. Hence
$\mathcal F_{\mathrm{col}}=J^{-1}$ is unitary as well. Moreover, using~(\ref{eq:NA_in_terms_of_JT}),
\[
   \mathcal NA_{m,n}^{\mathrm{Sch}}
   J(f_0,\ldots,f_{m-1})=
   \sum_{r=0}^{m-1}
   \mathcal NA_{m,n}^{\mathrm{Sch}}J_rf_r=
   \sum_{r=0}^{m-1}
   J_rT_{n,r}f_r=
   J\bigl(T_{n,0}f_0,\ldots,T_{n,m-1}f_{m-1}\bigr).
\]
Hence $\displaystyle{
   J^{-1}\mathcal{NA}_{m,n}^{\mathrm{Sch}}J\left(=\mathcal{F}_{\mathrm{col}}\,
   \mathcal{NA}_{m,n}^{\mathrm{Sch}}\,
   \mathcal{F}_{\mathrm{col}}^{-1}\right)=
   \bigoplus_{r=0}^{m-1}T_{n,r}}$. 
\end{proof}

\subsection{Step 3} 

Recall that $\beta_{m,n}=q_mn\gamma(\Sch(m,n))=q_mn\left(1-\lambda_2\bigl(\mathcal{NA}^{\mathrm{Sch}}_{m,n}\bigr)\right)$. We also remark that each matrix $T_{n,r}$ as defined above is real symmetric. 

One can think of the normalized $\mathcal{NA}^{\mathrm{Sch}}_{m,n}$ as a random-walk transition operator. Since the generating set is symmetric, $\mathcal{NA}^{\mathrm{Sch}}_{m,n}$ is self-adjoint. Since $\Sch(m,n)$ is connected, the largest eigenvalue of $\mathcal{NA}^{\mathrm{Sch}}_{m,n}$ is 1, with eigenvector $\mathbf{1}$ (the vector whose all entries are 1). Moreover, this eigenvalue is unique. Notice that 
\[
(T_{n,0}\mathbf{1})(p)=\frac{1}{n}\left((p-1)+\sum_{q=1}^{n+1-p}1\right)=\frac{1}{n}\left(p-1+(n+1-p)\right)=1.
\] So 1 is an eigenvalue of $T_{n,0}$ with eigenvector $\mathbf{1}$.

Proposition~\ref{prop:A_in_terms_of_T} gives that 
$\displaystyle{\Spec\bigl(\mathcal{NA}^{\mathrm{Sch}}_{m,n}\bigr)=\bigsqcup_{r=0}^{m-1}\Spec(T_{n,r})}$. Since the unique largest eigenvalue $1$ occurs in $T_{n,0}$, we have that 

\begin{equation}\label{eq:lambda_2(A)_in_terms_of_T}
    \lambda_2\left(\mathcal{NA}^{\mathrm{Sch}}_{m,n}\right)=\max\left\{\lambda_2(T_{n,0}),\max_{1\leq r\leq m-1}\{\lambda_{\max}(T_{n,r})\}\right\}.
\end{equation}

We shall now bound $\lambda_2\left(\mathcal{NA}^{\mathrm{Sch}}_{m,n}\right)$ using the previous equality, and shall consider three cases.

\subsubsection{$r=0$ case}\label{sec:r=0}

Let $\rho_{[n]}:S_n\to U(\ell^2([n];\mathbb{C}))$ be given by $(\rho_{[n]}(\sigma)f)(p)=f(\sigma^{-1}(p))$. Since $c_r=1$ in this case, it follows that 
\begin{align*}
\frac{1}{n}\left(\sum_{i=1}^{n}(\rho_{[n]}(r_i)f)(p)\right)&=\frac{1}{n}\left((p-1)f(p)+\sum_{i=p}^{n}f(i+1-p)\right)\\
&=\frac{1}{n}\left((p-1)f(p)+\sum_{q=1}^{n+1-p}f(q)\right)\\
    &=(T_{n,0}f)(p),
\end{align*}
where $r_1$ is the identity. The first equality follows because $r_i(p)=p$ for $i<p$, while $r_i(p)=i+1-p$ if $i\geq p$. Therefore,
\[
    \lambda_2\left(T_{n,0}\right)=\lambda_2\left(\frac{1}{n}\sum_{i=1}^n\rho_{[n]}(r_i)\right).
    \]

 Gunnells, Scott and Walden~\cite[Proposition~4.1]{GSW07}
compute the spectrum of the unnormalized operator $\sum_{i=1}^n\rho_{[n]}(r_i)$ and show that its two largest eigenvalues are $n$ and $n-1$ for $n\geq3$. Therefore, for $n\ge3$, $\lambda_2(T_{n,0})=\frac{n-1}{n}$, and hence $1-\lambda_2(T_{n,0})=\frac{1}{n}$. Cesi~\cite{Cesi2009} uses their calculation as an ingredient in proving that the ordinary pancake graph $P(1,n)$ has unnormalized adjacency spectral gap equal to $1$.

For $n=2$,
\[
   T_{2,0}
   =
   \frac12
   \begin{pmatrix}
      1&1\\
      1&1
   \end{pmatrix},
\]
so its eigenvalues are $1$ and $0$.  Thus $1-\lambda_2(T_{2,0})=1\ge\frac{1}{2}$, and so $1-\lambda_2(T_{n,0})\geq\frac{1}{n}$ for all $n\geq2$.

\subsubsection{Non-zero $r$ of order other than two case}\label{sec:m_odd} In this case, $r\neq0$ and $2r\not\equiv0\pmod m$, and so $c_r\neq-1$. We estimate $\lambda_{\max}(T_{n,r})$ by utilizing the following elementary bound.  Let $T$ be
a matrix and let $\lambda$ be one of its eigenvalues, with eigenvector $\mathbf{v}\ne0$.  Choose the component $\mathbf{v}(p)$ of $\mathbf{v}$ that has the largest absolute value. That is, $|\mathbf{v}(p)|=\max_{q}|\mathbf{v}(q)|$. Then
\[
\begin{aligned}
   |\lambda|\,|\mathbf{v}(p)|=
   \left|\sum_q T(p,q)\mathbf{v}(q)\right|\leq
   \sum_q|T(p,q)|\,|\mathbf{v}(q)|\leq
   \left(\sum_q|T(p,q)|\right)|\mathbf{v}(p)|.
\end{aligned}
\]
Therefore, $|\lambda|\leq \max_p\sum_q|T(p,q)|$.
Applying this to $T_{n,r}$ gives
\[
\begin{aligned}
   \sum_{q=1}^n|T_{n,r}(p,q)|
   \leq
   \frac{1}{n}\left((p-1)+|c_r|(n+1-p)\right)
   \leq
   \frac{1}{n}(n-1+|c_r|)
\end{aligned}.
\]
Thus $\lambda_{\max}(T_{n,r}) \leq 1-\frac{1-|c_r|}{n}$. In the case $r\neq 0$ and $r\neq m/2$, it follows that 
\[
1-|c_r|\geq \begin{cases}
    1-\cos\left(\frac{\pi}{m}\right)&\text{if $m$ is odd}\\
    1-\cos\left(\frac{2\pi}{m}\right)&\text{if $m$ is even and $r\neq m/2$}
\end{cases}.
\]

\subsubsection{Non-zero $r$ of order two case}\label{sec:m_even} In this case, $2r\equiv0\pmod m$, and so $c_{m/2}=-1$. Unfortunately, the argument from Section~\ref{sec:m_odd} does not work since in this case $|c_r|=1$. Notice that since $c_{m/2}=-1$,
\begin{equation}\label{eq:T_in_terms_of_O}
    (T_{n,\frac{m}{2}}f)(p)=\frac{1}{n}(p-1)f(p)-\frac{1}{n}\sum_{q=1}^{n+1-p}f(q)=\frac{1}{n}\sum_{i=1}^{n}(O_if)(p),
\end{equation} where $(O_if)(p)=(-1)^{\delta_{i\geq p}}f(r_i\cdot p)$, where $\delta_{i\geq p}$ is 1 if $i\geq p$ and 0 otherwise. It is easy to see that $O_i$ is an involution, unitary, and self-adjoint. Moreover, notice that

\begin{align*}
    \|f-O_if\|^2_2&=\langle f-O_if,f-O_if\rangle=\|f\|_2^2-2\langle f, O_if\rangle+\|O_if\|_2^2\\&=2\|f\|_2^2-2\langle f,O_if\rangle=2\langle f,(I-O_i)f\rangle,
\end{align*} from which it follows that $\langle f,(I-O_i)f\rangle=\frac{1}{2}\|f-O_if\|^2_2$. Therefore, substituting into (\ref{eq:T_in_terms_of_O}), it follows that
\begin{equation}\label{eq:f_dot_product_in_terms_of_Os}
    \left\langle f,\left(I-T_{n,\frac{m}{2}}\right)f\right\rangle=\frac{1}{2n}\sum_{i=1}^n\|f-O_if\|^2_2.
\end{equation}

Since $(O_if)(p)=f(p)$ if $i<p$ and $(O_if)(p)=-f(i+1-p)$ if $i\geq p$, we have
\[
\frac{1}{2}\sum_{i=1}^n\|f-O_if\|^2_2=\frac{1}{2}\sum_{i=1}^n\sum_{p=1}^{i}|f(p)+f(i+1-p)|^2.
\]
Notice that for each value of $1\leq q\leq \frac{n}{2}$, the double sum includes the following terms: $|f(q)+f(q)|^2$ (when $i=2q-1$ and $p=q$), $|f(q)+f(n+1-q)|^2$ (when $i=n$ and $p=q$), and $|f(n+1-q)+f(q)|^2$ (when $i=n$ and $p=n+1-q$). Therefore, 

\begin{align*}
\frac{1}{2}\sum_{i=1}^n\sum_{p=1}^{i}|f(p)+f(i+1-p)|^2&\geq \frac{1}{2}\sum_{q=1}^{\lfloor n/2\rfloor}(4|f(q)|^2+2|f(q)+f(n+1-q)|^2)+\\&\quad\quad\quad+\frac{1}{2}\delta_{n\text{ odd}}\left(4\left|f\left(\frac{n+1}{2}\right)\right|^2\right),
\end{align*} where $\delta_{n \text{ odd}}=1$ if $n$ is odd and 0 otherwise. We now apply an elementary inequality that can be derived by observing that
\[
4|x|^2+2|x+y|^2-(4-2\sqrt{2})(|x|^2+|y|^2)=2(1+\sqrt{2})|x+(\sqrt{2}-1)y|^2\geq0.
\] In particular, $4|x|^2+2|x+y|^2\geq (4-2\sqrt{2})(|x|^2+|y|^2)$.

Therefore, if $n$ is even, then 
\begin{align*}
    \frac{1}{2}\sum_{q=1}^{\lfloor n/2\rfloor}(4|f(q)|^2+2|f(q)+f(n+1-q)|^2)&\geq \frac{1}{2}\sum_{q=1}^{\lfloor n/2\rfloor}(4-2\sqrt{2})(|f(q)|^2+|f(n+1-q)|^2)\\&=\frac{4-2\sqrt{2}}{2}\|f\|^2_2.
\end{align*}

If $n$ is odd, then
\begin{align*}
    \frac{1}{2}\sum_{q=1}^{\lfloor n/2\rfloor}(4|f(q)|^2+2|f(q)+f(n+1-q)|^2)+\\+\frac{1}{2}\left(4\left|f\left(\frac{n+1}{2}\right)\right|^2\right)
    &\geq \sum_{q=1}^{\lfloor n/2\rfloor}\frac{4-2\sqrt{2}}{2}(|f(q)|^2+|f(n+1-q)|^2)+\\&\qquad+\frac{4-2\sqrt{2}}{2}\left|f\left(\frac{n+1}{2}\right)\right|^2\\
    &=\frac{4-2\sqrt{2}}{2}\|f\|^2_2.
\end{align*}

Therefore, for all $n$, 
\begin{equation}\label{eq:f-O_inequality}
    \frac{1}{2}\sum_{i=1}^n\|f-O_if\|^2_2\geq (2-\sqrt{2})\|f\|^2.
\end{equation} 

Combining~(\ref{eq:f_dot_product_in_terms_of_Os}) with~(\ref{eq:f-O_inequality}), we obtain
$\displaystyle{
 \left\langle f,\left(I-T_{n,\frac{m}{2}}\right)f\right\rangle\geq\frac{4-2\sqrt{2}}{2n}\|f\|^2_2}$, or equivalently,
\[
T_{n,\frac{m}{2}}\preceq\left(1-\frac{2-\sqrt2}{n}\right)I, 
\] from which it follows that $\displaystyle{\lambda_{\max}\left(T_{n,\frac{m}{2}}\right)\leq 1-\frac{2-\sqrt{2}}{n}}$.

Summarizing the results from the last three subsections, we have the following.

\begin{proposition}\label{prop:beta>=}
    For $m\geq2$ and $n\geq2$, $\displaystyle{\beta_{m,n}\geq q_m\alpha_m}$.
\end{proposition}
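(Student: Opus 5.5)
The plan is to combine equation~(\ref{eq:beta_mn_in_terms_of_A}) with the decomposition~(\ref{eq:lambda_2(A)_in_terms_of_T}). By~(\ref{eq:beta_mn_in_terms_of_A}), $\beta_{m,n}=q_mn\bigl(1-\lambda_2(\mathcal{NA}^{\mathrm{Sch}}_{m,n})\bigr)$, so it suffices to show
\[
1-\lambda_2\bigl(\mathcal{NA}^{\mathrm{Sch}}_{m,n}\bigr)\geq\frac{\alpha_m}{n}.
\]
By~(\ref{eq:lambda_2(A)_in_terms_of_T}), $\lambda_2(\mathcal{NA}^{\mathrm{Sch}}_{m,n})$ is the maximum of $\lambda_2(T_{n,0})$ and of $\lambda_{\max}(T_{n,r})$ for $1\leq r\leq m-1$. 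We therefore bound each term by $1-\alpha_m/n$ using the three subsections of Step~3.

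\textbf{The block $r=0$.} Section~\ref{sec:r=0} gives $1-\lambda_2(T_{n,0})\geq 1/n$ for all $n\geq2$. Since $\alpha_m\leq1$ in every case ($1-\cos\theta\leq 1$ for $\theta\in(0,\pi/2]$, and $2-\sqrt2<1$), this yields $1-\lambda_2(T_{n,0})\geq \alpha_m/n$. One caveat needs care: for $m=2$, $\alpha_2=\min\{1-\cos\pi,2-\sqrt2\}=2-\sqrt2\leq1$, so the bound still holds. For $m=3$ the formula gives $\alpha_3=1-\cos(\pi/3)=1/2$, which is also fine.

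\textbf{Blocks with $r\neq0$ and $2r\not\equiv0$.} Section~\ref{sec:m_odd} gives $\lambda_{\max}(T_{n,r})\leq 1-(1-|c_r|)/n$. It remains to check that $1-|c_r|\geq\alpha_m$.
\begin{itemize}[nosep]
\item For $m$ odd, $|c_r|=|\cos(2\pi r/m)|$ is maximized over $r\neq0$ at the residue making $2\pi r/m$ closest to $0$ or $\pi$ modulo $2\pi$. The closest approach to $\pi$ is $\pi\pm\pi/m$, which gives $|c_r|\leq\cos(\pi/m)$. The closest approach to $0$ is $2\pi/m$, which gives the smaller value $\cos(2\pi/m)$. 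Hence $1-|c_r|\geq 1-\cos(\pi/m)=\alpha_m$.
\item For $m$ even with $r\neq m/2$, the angles $2\pi r/m$ stay at distance at least $2\pi/m$ from both $0$ and $\pi$, so $1-|c_r|\geq 1-\cos(2\pi/m)\geq\alpha_m$.
\end{itemize}
The only mildly delicate point is this elementary angle check. I would verify it by writing the distance from $2\pi r/m$ to $\pi\mathbb{Z}$ as $\pi|2r-km|/m$ and noting that $|2r-km|\geq1$ in general, $\geq2$ when $m$ is even, and $\neq0$ by hypothesis.

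\textbf{Block $r=m/2$ (for $m$ even).} Section~\ref{sec:m_even} gives $\lambda_{\max}(T_{n,m/2})\leq 1-(2-\sqrt2)/n\leq 1-\alpha_m/n$.

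\textbf{Conclusion.} Taking the maximum over all blocks gives $\lambda_2(\mathcal{NA}^{\mathrm{Sch}}_{m,n})\leq 1-\alpha_m/n$. Multiplying by $q_mn$ then yields $\beta_{m,n}\geq q_m\alpha_m$. I expect no real obstacle, since all the spectral work is already done. The main thing to get right is the bookkeeping of which residues $r$ fall into which case, and confirming that the $r=0$ bound $1/n$ dominates $\alpha_m/n$.
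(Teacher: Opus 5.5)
Your proposal is correct and follows the paper's proof essentially verbatim: it reduces to showing $1-\lambda_2(\mathcal{NA}^{\mathrm{Sch}}_{m,n})\geq\alpha_m/n$ block by block, using $1/n\geq\alpha_m/n$ (since $\alpha_m\leq 1$) for $r=0$, the row-sum bound $1-|c_r|$ for nonzero $r$ with $2r\not\equiv 0\pmod m$, and the $2-\sqrt{2}$ bound for $r=m/2$. Your distance-to-$\pi\mathbb{Z}$ check, $|2r-km|\geq 1$ (or $\geq 2$ for even $m$), also justifies $1-|c_r|\geq\alpha_m$, which the paper only asserts.
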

\begin{proof}
 Notice that the case $r=0$ satisfies
$\displaystyle{
   1-\lambda_2\left(T_{n,0}\right)\geq\frac{1}{n}.
}$
For every non-zero color $r$, the estimates from Section~\ref{sec:m_odd} and~\ref{sec:m_even} give
\[
   1-\lambda_{\max}(T_{n,r})
   \geq\frac{\alpha_m}{n}\quad\text{ with }\quad \alpha_m=
    \begin{cases}
        1-\cos(\pi/m)&\text{ if $m$ is odd}\\
        \min\{1-\cos(2\pi/m),2-\sqrt{2}\}&\text{ if $m$ is even}
    \end{cases},
\]
Since $\alpha_m\le1$, the $r=0$ estimate also implies $\displaystyle{1-\lambda_2(T_{n,0})
   \ge\frac{\alpha_m}{n}}$. Therefore, it follows that
\[
\begin{aligned}
   \beta_{m,n}=q_mn\left(1-\lambda_2\left(\mathcal NA_{m,n}^{\mathrm{Sch}}
   \right)\right)
   &=
   q_mn\min\left\{
      1 -\lambda_2\left(T_{n,0}\right),
      \min_{1\leq r<m}
      \left\{1-\lambda_{\max}\left(T_{n,r}\right)\right\}
   \right\}\\
   &\geq q_m\alpha_m,
\end{aligned}
\] as desired. 
\end{proof}

\subsection{Step 4}

We now use induction to prove the Main Theorem.
\begin{proof}[Proof of Theorem~\ref{thm:main} (the Main Theorem)]
For the lower bound, we prove by induction that $\psi_{m,n}\geq q_m\alpha_m$. For the base case $n=1$, notice that $P(2,1)$ is an edge with unnormalized spectral gap equal to $\psi_{2,1}=2$.  Moreover, if $m\geq3$, then $P(m,1)$ is a cycle, which has unnormalized spectral gap $\psi_{m,1}=2(1-\cos(2\pi/m))$. Since $0<\pi/m<2\pi/m\leq 2\pi/3$, for odd $m$, $1-\cos(\pi/m)\leq 1-\cos(2\pi/m)$, as the cosine function is decreasing on $[0,2\pi/3]$. So $\psi_{m,1}\geq q_m\alpha_m$ for all $m\geq2$.  

Having established the case $n=1$ with $m\geq2$, let us assume that $\psi_{m,n-1}\geq\alpha_mq_m$. The Schreier form of Cesi's semi-recursive formula, Corollary~\ref{cor:cesi}, yields that
\begin{equation}\label{eq:mn}
\min\{\psi_{m,n-1},\beta_{m,n}\}\leq\psi_{m,n}\leq\beta_{m,n}.
\end{equation} By induction hypothesis, $\psi_{m,n-1}\geq q_m\alpha_m$. Moreover, Proposition~\ref{prop:beta>=} gives that $\beta_{m,n}\geq q_m\alpha_m$. Therefore, Corollary~\ref{cor:cesi} yields
\[
\psi_{m,n}\geq\min\{\psi_{m,n-1},\beta_{m,n}\}\geq q_m\alpha_m,
\] and after normalizing, $\gamma(P(m,n))\geq \alpha_m/n$.

To prove the upper bound, we utilize the variational principle from Section~\ref{sec:variational} and we only need to find a non-zero vector $\mathbf{v}$ and compute its Rayleigh quotient $R_{T_{n,r}}(\mathbf{v})$. Let us recall that 
\[
   (T_{n,r}f)(p)
   =
   \frac{1}{n}(p-1)f(p)
   +
   \frac{1}{n}c_r
   \sum_{q=1}^{n+1-p}f(q).
\] For $1\leq i\leq n$, we let $\displaystyle{\mathbf{e}_i(p)=\begin{cases}1&\text{ if } i=p\\0&\text{ otherwise}\end{cases}}\in\ell^2([n];\mathbb{C})$. Notice that 
\[
\langle \mathbf{e}_n,T_{n,r}\mathbf{e}_n\rangle=\sum_{p=1}^n\overline{\mathbf{e}_n(p)}(T_{n,r}\mathbf{e}_n)(p)=(T_{n,r}\mathbf{e}_n)(n)=\frac{1}{n}(n-1)+\frac{1}{n}c_r\sum_{q=1}^1\mathbf{e}_n(q)=\frac{1}{n}(n-1),
\] for $n\geq2$. Therefore, the variational principle gives that

\[
\lambda_{\max}(T_{n,r})\geq R_{T_{n,r}}(\mathbf{e}_n)=\frac{\langle \mathbf{e}_n,T_{n,r}\mathbf{e}_n\rangle}{\langle\mathbf{e}_n,\mathbf{e}_n\rangle}=\frac{1}{n}(n-1),
\]and so $\lambda_{\max}(T_{n,r})\geq \frac{n-1}{n}$. Moreover, since by (\ref{eq:lambda_2(A)_in_terms_of_T}), $\lambda_2(\mathcal{NA}^{\mathrm{Sch}}_{m,n})=\max\{\lambda_2(T_{n,0}),\max_{1\leq r<m}\{\lambda_{\max}(T_{n,r})\}\}$, it follows that if $r\neq 0$, then $\lambda_2(\mathcal{NA}^{\mathrm{Sch}}_{m,n})\geq \lambda_{\max}(T_{n,r})\geq\frac{1}{n}(n-1)$. Therefore,
\[
\beta_{m,n}=q_mn\bigl(1-\lambda_2(\mathcal{NA}^{\mathrm{Sch}}_{m,n})\bigr)\leq q_mn\left(1-\frac{n-1}{n}\right)=q_m.
\]

From Corollary~\ref{cor:cesi},  $\psi_{m,n}\leq \beta_{m,n}$, and so $\displaystyle{\gamma(P(m,n))=\frac{\psi_{m,n}}{q_mn}\leq \frac{1}{n}}$.
\end{proof}

\begin{remark}
    The case $m=1$ was studied by Cesi~\cite{Cesi2009} and later by Chung and Tobin~\cite{ChungTobin2017}. Indeed, they show that the following equality holds: $\gamma(P(1,n))=\frac{1}{n-1}$ for $n\geq3$.
\end{remark}

\subsection{Asymptotics with fixed $n$}

As a corollary of the Main Theorem, we can also provide asymptotics for $\gamma(P(m,n))$ with fixed $n$.

\begin{corollary}\label{cor:Theta_n}
    For all $n,m\geq2$, 
    $\displaystyle{
    \frac{2}{nm^2}\leq \gamma(P(m,n))\leq\frac{\pi^2(n+1)}{nm^2}}$.
\end{corollary}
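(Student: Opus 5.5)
Both inequalities follow from the Main Theorem and its proof, combined with elementary cosine estimates.

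\textbf{Lower bound.} By Theorem~\ref{thm:main}, $\gamma(P(m,n))\geq \alpha_m/n$, so it suffices to show $\alpha_m\geq 2/m^2$ for all $m\geq2$. We use the standard inequality $1-\cos x\geq 2x^2/\pi^2$ for $x\in[0,\pi]$. It follows from concavity: $\sin(x/2)\geq x/\pi$ on $[0,\pi]$, and $1-\cos x=2\sin^2(x/2)$.
\begin{itemize}
\item For odd $m$: $1-\cos(\pi/m)\geq 2(\pi/m)^2/\pi^2=2/m^2$.
\item For even $m$: $1-\cos(2\pi/m)\geq 8/m^2\geq 2/m^2$, and $2-\sqrt2\approx0.586\geq 2/m^2$ whenever $m\geq2$ (at $m=2$, $2/m^2=1/2$).
\end{itemize}
So $\alpha_m\geq 2/m^2$ in every case, giving the lower bound.

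\textbf{Upper bound.} We sharpen the upper-bound step of the Main Theorem. There, $\psi_{m,n}\leq\beta_{m,n}=q_mn\bigl(1-\lambda_2(\mathcal{NA}^{\mathrm{Sch}}_{m,n})\bigr)$. By~(\ref{eq:lambda_2(A)_in_terms_of_T}), $\lambda_2(\mathcal{NA}^{\mathrm{Sch}}_{m,n})\geq\lambda_{\max}(T_{n,1})$ whenever $m\geq2$. Hence
\[
\gamma(P(m,n))\leq 1-\lambda_{\max}(T_{n,1}).
\]
We bound $\lambda_{\max}(T_{n,1})$ from below by a Rayleigh quotient. Rather than the test vector $\mathbf e_n$, take the all-ones vector $\mathbf 1\in\ell^2([n])$. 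Then
\[
(T_{n,r}\mathbf 1)(p)=\tfrac1n\bigl((p-1)+c_r(n+1-p)\bigr),
\]
so
\[
\langle\mathbf 1,T_{n,r}\mathbf 1\rangle=\tfrac1n\Bigl(\tfrac{n(n-1)}2+c_r\tfrac{n(n+1)}2\Bigr).
\]
Dividing by $\|\mathbf 1\|^2=n$ gives
\[
R_{T_{n,r}}(\mathbf 1)=\frac{(n-1)+c_r(n+1)}{2n}=1-\frac{(n+1)(1-c_r)}{2n}.
\]
With $r=1$ and $1-\cos x\leq x^2/2$, we get $1-c_1\leq 2\pi^2/m^2$. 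Therefore
\[
1-\lambda_{\max}(T_{n,1})\leq \frac{(n+1)}{2n}\cdot\frac{2\pi^2}{m^2}=\frac{\pi^2(n+1)}{nm^2},
\]
which is exactly the claimed upper bound.

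\textbf{Main obstacle.} The main step is the choice of test vector: $\mathbf e_n$ only yields $1/n$, while $\mathbf 1$ captures the $m^{-2}$ decay. The rest is routine. In the lower bound, the only point needing care is small even $m$, where the $2-\sqrt2$ term must be checked against $2/m^2$; this is handled above at $m=2$.
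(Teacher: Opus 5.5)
Your proposal is correct and follows the paper's proof essentially step for step. The lower bound comes from $\alpha_m\geq 2/m^2$ via $1-\cos x=2\sin^2(x/2)\geq 2x^2/\pi^2$, treating odd and even $m$ separately. The upper bound comes from $\gamma(P(m,n))\leq\beta_{m,n}/(q_mn)=1-\lambda_2(\mathcal{NA}^{\mathrm{Sch}}_{m,n})\leq 1-\lambda_{\max}(T_{n,1})$, using the Rayleigh quotient of $T_{n,1}$ at the all-ones vector together with $1-\cos x\leq x^2/2$.
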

\begin{proof}
    Notice that the Main Theorem gives that $\gamma(P(m,n))\geq \alpha_m/n$. Since $1-\cos(x)= 2\sin^2(x/2)$, it follows that for $0\leq x\leq \pi$,
    \[
    1-\cos(x)=2\sin^2\left(\frac{x}{2}\right)\geq \frac{2x^2}{\pi^2}.
    \]

    Let us consider the parity of $m$:
    \begin{itemize}
        \item $m$ odd: Then $\displaystyle{\alpha_m=1-\cos\left(\frac{\pi}{m}\right)\geq \frac{2}{m^2}}$.
        \item $m$ even: Then $\alpha_m=\min\{1-\cos(2\pi/m),2-\sqrt{2}\}$. Notice that $\displaystyle{1-\cos\left(\frac{2\pi}{m}\right)\geq \frac{8}{m^2}\geq \frac{2}{m^2}}$, and that $2-\sqrt{2}\geq 2/m^2$ for all $m\geq2$.
    \end{itemize}
    So, it follows that for all $m\geq2$,
    \[
    \alpha_m\geq\frac{2}{m^2},\quad\text{ and hence }\quad \gamma(P(m,n))\geq\frac{2}{nm^2}\quad\text{ for all }m,n\geq2.
    \]  

    To establish an upper bound for $\gamma({P(m,n)})$, we shall upper bound $\beta_{m,n}$ and then use Corollary~\ref{cor:cesi}. For this, take $r=1$. Then, $c_1=\cos(2\pi/m)$ and 
    \[(T_{n,1}f)(p)=\frac{1}{n}\left((p-1)f(p)+c_1\sum_{q=1}^{n+1-p}f(q)\right).
    \]

    Let $\mathbf{1}_n\in\ell^2([n];\mathbb{C})$ denote the vector whose entries are all $1$.  Then
    \[
    (T_{n,1}\mathbf{1}_n)(p)=\frac{p-1}{n}+\frac{c_1(n+1-p)}{n}.
    \] 

    Therefore, 
    \[
    R_{T_{n,1}}(\mathbf{1}_n)=\frac{\langle \mathbf{1}_n,T_{n,1}\mathbf{1}_n\rangle}{\langle\mathbf{1}_n,\mathbf{1}_n\rangle}=\frac{1}{n}\sum_{p=1}^n\left(\frac{p-1}{n}+\frac{c_1(n+1-p)}{n}\right)=\frac{(n-1)+c_1(n+1)}{2n}.
    \] It follows that 
    $\displaystyle{\lambda_{\max}(T_{n,1})\geq 1-\frac{n+1}{2n}(1-c_1)=1-\frac{n+1}{2n}(1-\cos(2\pi/m)).
    }$ Since $r=1$ is a non-zero color mode, $\lambda_2(\mathcal{NA}^{\mathrm{Sch}}_{m,n})\geq\lambda_{\max}(T_{n,1})$. Thus
    \[
    \lambda_2(\mathcal{NA}^{\mathrm{Sch}}_{m,n})\geq 1-\frac{n+1}{2n}(1-\cos(2\pi/m)), \text{ and so }1-\lambda_2(\mathcal{NA}^{\mathrm{Sch}}_{m,n})\leq\frac{n+1}{2n}(1-\cos(2\pi/m)).
    \] Using $1-\cos(x)\leq\frac{x^2}{2}$, we have that 
    \[
    1-\lambda_2(\mathcal{NA}^{\mathrm{Sch}}_{m,n})\leq\frac{n+1}{2n}\frac{4\pi^2}{2m^2}=\frac{\pi^2(n+1)}{nm^2}.
    \]
Thus, 
\[
\gamma(P(m,n))=\frac{\psi_{m,n}}{q_mn}\leq\frac{\beta_{m,n}}{q_mn}=1-\lambda_2(\mathcal{NA}^{\mathrm{Sch}}_{m,n})\leq\frac{\pi^2(n+1)}{nm^2}.
\] Combining the upper and lower bounds, we conclude that, for every fixed $n\geq2$, $\gamma(P(m,n))$ is $\Theta_n(m^{-2})$ as $m\to\infty$.
\end{proof}

\begin{remark}
Recall that the \textit{expansion ratio} of a finite
$d$-regular graph $\mathcal G=(V,E)$ is
\[
   h(\mathcal G)= \min_{\substack{\emptyset\neq U\subseteq V\\|U|\leq |V|/2}} \frac{|\partial U|}{|U|},
\]
where $\partial U$ is the set of edges having one endpoint in $U$ and the other in $V\setminus U$.  The \textit{discrete Cheeger inequality}
gives $h(\mathcal G)\leq d\sqrt{2\gamma(\mathcal G)}$.
For $m>2$, the graph $P(m,n)$ is $2n$-regular.  Hence the preceding
Corollary~\ref{cor:Theta_n} yields
\[
   h(P(m,n))\leq 2n\sqrt{2\gamma(P(m,n))}\leq\frac{2\pi\sqrt{2n(n+1)}}{m}.
\]
Thus, for every fixed $n\geq2$, $h(P(m,n))\to 0$ as $m\to\infty$.

Furthermore, recall that a family $\{\mathcal{G}_j\}_{j\geq1}$ of finite $d$-regular graphs $\mathcal{G}_j=(V_j,E_j)$ with $|V_j|\to\infty$ is an \textit{expander family} if there exists $\epsilon>0$
such that the expansion ratio of every graph in the family is at least $\epsilon$. As a consequence, for every fixed $n>2$, the $2n$-regular family $\{P(m,n)\}_{m>2}$ is not an expander family. In the notation of Blanco and Buehrle~\cite{BB25}, the graph $P(m,n)$ is their undirected prefix-reversal graph $\mathbb P_m(n)$; therefore this disproves their Conjecture 6.3 from~\cite{BB25}.
\end{remark}

We finish this section with an example that illustrates the proof steps in the case $m=3$ and $n=2$. The Schreier graph is constructed, and then the operators $T_{n,r}$, from which the spectral gap is computed. 

\subsubsection{An example}

\begin{example} Let $G=S(3,2)$ and $H=S(3,1)$. We distinguish the symbol $2$.  The coset space $G/H$ is
naturally identified with $X_{3,2}=\mathbb Z_3\times[2]$, where a state $(a,p)$ records the color $a\in\mathbb Z_3$ and the position $p\in\{1,2\}$ of the distinguished symbol. Notice that in this case, the full Cayley graph has $3^2\times 2!=18$ vertices while the coset Schreier graph has only $3\times 2=6$ vertices.  

The set of generalized prefix reversals is $S=\{r_1^+,r_1^-,r_2^+,r_2^-\}$. Its action on $X_{3,2}$ is
\[
\begin{aligned}
   r_1^\pm(a,1)&=(a\pm1,1),
   &
   r_1^\pm(a,2)&=(a,2),\\
   r_2^\pm(a,1)&=(a\pm1,2),
   &
   r_2^\pm(a,2)&=(a\pm1,1),
\end{aligned}
\]
where the color additions and subtractions are performed modulo $3$.

Thus the three vertices $(0,1),(1,1),(2,1)$ form a triangle under the length-one reversals.  Each vertex
$(a,2)$ has two loops, one from $r_1^+$ and one
from $r_1^-$, and the length-two reversals join $(a,1)$ to $(a+1,2)$ and $(a-1,2)$. The corresponding Schreier graph is depicted in Figure~\ref{fig:schreier-m3-n2}.

\begin{figure}[t]
\centering
\begin{tikzpicture}[
    scale=1.05,
    vertex/.style={
        circle,
        draw,
        thick,
        minimum size=10mm,
        inner sep=1pt,
        font=\small
    },
    positionone/.style={
        vertex,
        fill=white
    },
    positiontwo/.style={
        vertex,
        fill=gray!15
    },
    rtwo/.style={
        very thick
    },
    rone/.style={
        thick,
        dashed
    },
    holding/.style={
        thick,
        densely dotted
    }
]

\node[positionone] (a0) at ( 90:2.8) {$(0,1)$};
\node[positiontwo] (b1) at ( 30:2.8) {$(1,2)$};
\node[positionone] (a2) at (-30:2.8) {$(2,1)$};
\node[positiontwo] (b0) at (-90:2.8) {$(0,2)$};
\node[positionone] (a1) at (-150:2.8) {$(1,1)$};
\node[positiontwo] (b2) at (150:2.8) {$(2,2)$};

\draw[rtwo]
    (a0) -- (b1) -- (a2) -- (b0) -- (a1) -- (b2) -- (a0);

\draw[rone]
    (a0) -- (a1) -- (a2) -- (a0);

\path[holding]
    (b1) edge[
        loop right,
        min distance=12mm,
        looseness=6
     ] node[font=\scriptsize, right=2pt] {$r_1^+$} (b1);

\path[holding]
    (b1) edge[
        loop above,
        min distance=12mm,
        looseness=6
     ] node[font=\scriptsize, right=2pt] {$r_1^-$} (b1);

\path[holding]
    (b0) edge[
        loop right,
        min distance=12mm,
        looseness=6
     ] node[font=\scriptsize, right=2pt] {$r_1^+$} (b0);

    \path[holding]
    (b0) edge[
        loop left,
        min distance=12mm,
        looseness=6
     ] node[font=\scriptsize, left=2pt] {$r_1^-$} (b0);

\path[holding]
    (b2) edge[
        loop left,
        min distance=12mm,
        looseness=6
     ] node[font=\scriptsize, left=2pt] {$r_1^-$} (b2);

    \path[holding]
    (b2) edge[
        loop above,
        min distance=12mm,
        looseness=6
     ] node[font=\scriptsize, above=2pt] {$r_1^+$} (b2);

\draw[rtwo] (3.45,1.05) -- (4.15,1.05);
\node[anchor=west, font=\scriptsize] at (4.25,1.05)
    {$r_2^\pm$};

\draw[rone] (3.45,0.35) -- (4.15,0.35);
\node[anchor=west, font=\scriptsize] at (4.25,0.35)
    {$r_1^\pm$ on $p=1$};

\draw[holding] (3.45,-0.35) -- (4.15,-0.35);
\node[anchor=west, font=\scriptsize, align=left] at (4.25,-0.35)
    {loops $r^\pm_1$ on $p=2$};

\end{tikzpicture}

\caption{
The coset Schreier graph $\Sch(3,2)$ for $m=3$ and $n=2$. A vertex $(a,p)$ records the color $a\in\mathbb Z_3$ and position $p\in[2]$ of the distinguished symbol.
}
\label{fig:schreier-m3-n2}
\end{figure}

Let $\mathcal NA^{\mathrm{Sch}}_{3,2}$ denote the normalized adjacency operator of this Schreier graph.  Since there are four generators,
\[
   \mathcal{NA}^{\mathrm{Sch}}_{3,2}
   =
   \frac14
   \left(
      \rho_{X_{3,2}}(r_1^+)+\rho_{X_{3,2}}(r_1^-)
      +\rho_{X_{3,2}}(r_2^+)+\rho_{X_{3,2}}(r_2^-)
   \right),
\]
where $\rho_{X_{3,2}}(\sigma)$ denotes the permutation matrix corresponding to $\sigma$. Because the generating set is symmetric, we may write its action as
\[
\begin{aligned}
   \bigl(\mathcal{NA}^{\mathrm{Sch}}_{3,2}F\bigr)(a,1)
   &=
   \frac14\Bigl(
      F(a+1,1)+F(a-1,1)
      +F(a+1,2)+F(a-1,2)
   \Bigr),\\
   \bigl(\mathcal{NA}^{\mathrm{Sch}}_{3,2}F\bigr)(a,2)
   &=
   \frac14\Bigl(
      2F(a,2)+F(a+1,1)+F(a-1,1)
   \Bigr).
\end{aligned}
\]

Order the vertices as follows: $(0,1),(1,1),(2,1),(0,2),(1,2),(2,2)$.

If $C$ denotes the following circulant matrix,
\[
   C=
   \begin{pmatrix}
      0&1&1\\
      1&0&1\\
      1&1&0
   \end{pmatrix},\quad \text{ then }\quad\mathcal{NA}^{\mathrm{Sch}}_{3,2}
   =
   \frac{1}{4}
   \begin{pmatrix}
      C&C\\
      C&2I_3
   \end{pmatrix}.
\]
Explicitly,
\[
   \mathcal{NA}^{\mathrm{Sch}}_{3,2}
   =
   \frac14
   \begin{pmatrix}
      0&1&1&0&1&1\\
      1&0&1&1&0&1\\
      1&1&0&1&1&0\\
      0&1&1&2&0&0\\
      1&0&1&0&2&0\\
      1&1&0&0&0&2
   \end{pmatrix}.
\]

Let $\omega=e^{2\pi i/3}$. For $r\in\{0,1,2\}$, let 
$\displaystyle{
(J_rf)(a,p)=\frac{1}{\sqrt{3}}\omega^{ra}f(p)\text{ and } V_r=\operatorname{im}(J_r).
}$
Fourier decomposition on $\mathbb Z_3$ gives $\ell^2(\mathbb Z_3\times[2];\mathbb C)=V_0\oplus V_1\oplus V_2$. Moreover, each $V_r$ is invariant under
$\mathcal{NA}^{\mathrm{Sch}}_{3,2}$. 

Additionally, let $c_r=\cos\left(\frac{2\pi r}{3}\right)$. If $F(a,p)=(1/\sqrt{3})\omega^{ra}f(p)=(J_rf)(a,p)$, then 

\[(\mathcal{NA}^{\mathrm{Sch}}_{3,2}F)(a,p) =(1/\sqrt{3})\omega^{ra}(T_{2,r}f)(p)\quad
\text{ where }\quad
   T_{2,r}
   =
   \frac12
   \begin{pmatrix}
      c_r&c_r\\
      c_r&1
   \end{pmatrix}.
\]
Therefore, $\mathcal{NA}^{\mathrm{Sch}}_{3,2} \cong T_{2,0}\oplus T_{2,1}\oplus T_{2,2}$. Notice that $c_0=1$, and $c_1=c_2=-\frac{1}{2}$, and so 
\[
   T_{2,0}
   =
   \frac12
   \begin{pmatrix}
      1&1\\
      1&1
   \end{pmatrix}
\quad\text{ and }\quad
   T_{2,1}=T_{2,2}
   =
   \begin{pmatrix}
      -\frac14&-\frac14\\[1mm]
      -\frac14&\frac12
   \end{pmatrix}.
\]
The eigenvalues of $T_{2,0}$ are $1$ and $0$, while the eigenvalues
of each of $T_{2,1}$ and $T_{2,2}$ are
\[
   \frac{1+\sqrt{13}}8
   \qquad\text{and}\qquad
   \frac{1-\sqrt{13}}8.
\]
It follows that
\[
   \operatorname{Spec}
   \bigl(\mathcal{NA}^{\mathrm{Sch}}_{3,2}\bigr)
   =
   \left\{
1,\,0,\,\left(\frac{1+\sqrt{13}}8\right)^{[2]},\, \left(\frac{1-\sqrt{13}}{8}\right)^{[2]}\right\},
\] where the superscript $[2]$ is used to denote eigenvalues with multiplicity 2. Therefore the normalized Schreier spectral gap is
\[
   1-\lambda_2
   \bigl(\mathcal{NA}^{\mathrm{Sch}}_{3,2}\bigr)
   =
   \frac{7-\sqrt{13}}8.
\]
Since the Schreier graph has operator degree $4$, its unnormalized
gap is
\[
   \beta_{3,2}
   =
   4\left(\frac{7-\sqrt{13}}8\right)
   =
   \frac{7-\sqrt{13}}2.
\]
Finally, notice that the semi-recursive formula holds in this case. Indeed, since $P(3,1)$ is the 3-cycle, 
\[
   \psi_{3,1}
   =
   2-2\cos\left(\frac{2\pi}{3}\right)
   =
   3.
\]
Since $\displaystyle{\beta_{3,2}=\frac{7-\sqrt{13}}2<3}$,
Corollary~\ref{cor:cesi} gives
\[
   \frac{7-\sqrt{13}}2
   =
   \min\{\psi_{3,1},\beta_{3,2}\}
   \le
   \psi_{3,2}
   \le
   \beta_{3,2}
   =
   \frac{7-\sqrt{13}}2.
\]
Therefore,
$\displaystyle{
   \psi_{3,2}
   =
   \frac{7-\sqrt{13}}2.
}$ 

Since $P(3,2)$ is $4$-regular, its normalized spectral gap is
$\displaystyle{
   \gamma(P(3,2))
   =
   \frac{\psi_{3,2}}4
   =
   \frac{7-\sqrt{13}}8.
}$
\end{example}

\section{Concluding remarks}

\begin{itemize}
    \item We proved that the normalized Laplacian spectral gap of the generalized pancake graph $P(m,n)$ is $\Theta_m(1/n)$, for fixed $m\geq 2$, as $n\to\infty$. This extends what was already known in the literature for the pancake graph $P(1,n)$. The constant in the lower bound depends on $m$, and it approaches 0 as $m$ gets larger. We also proved that $\gamma(P(m,n))$ is $\Theta_n(m^{-2})$ for fixed $n$ as $m\to\infty$. As a consequence, the family $\{P(m,n)\}_{m>2}$, with fixed $n>2$, cannot be an expander family, which disproves a conjecture of Blanco and Buehrle~\cite[Conjecture 6.3]{BB25}.
    \item Regarding the \textit{adjacency} spectral gap of $P(m,n)$, an exact formula is known for $m=1$ and all $n\geq2$ (see Cesi~\cite{Cesi2009} and Chung and Tobin~\cite{ChungTobin2017}). Moreover, the case $n=1$ and $m\geq2$ is elementary. For general $m,n\geq2$, no closed formula is currently known for the adjacency spectral gap. It seems difficult to even conjecture such a formula given that numerical experiments do not seem to suggest a discernible pattern. 
    \item We showed that $\psi_{m,n}\leq \beta_{m,n}$. In other words, the spectral gap of the coset Schreier graph is at least that of the full Cayley graph. Following the release of an earlier draft of this paper, Qiyuan (Alex) Gu observed that strict inequality is possible; for example, direct computation yields $\psi_{6,2}=1<4-2\sqrt{2}=\beta_{6,2}$. As a consequence, a conjecture of Graves and Zhu asserting that equality holds for $m\geq2, n\geq1$ ~\cite[Conjecture 2]{Greaves2026} is false. Further numerical experiments with code provided by Gu allows us to make the following conjecture.
    \begin{conjecture}\label{con:ineq}
        $\psi_{m,n} = \beta_{m,n}$ if and only if (i) $m = 6$ and $n \geq 7$, or (ii)  $2 \leq m \leq 5$ and $n \geq 2$.
    \end{conjecture}
    \item Our spectral-gap estimate has a direct interpretation for the associated random walk.  Consider the \textit{lazy simple random walk} on $P(m,n)$. At each discrete time, it remains at its current state with probability $1/2$; otherwise, it applies a uniformly chosen element of $R(m,n)$ to the current state of the walk. As a consequence of the bound for the spectral gap, the \textit{relaxation time} of the lazy simple random walk is $\Theta_m(n)$ as $n\to\infty$ for fixed $m\geq2$. The same order holds for the coset Schreier graph that tracks the color-position pairs. In particular, the relaxation time grows linearly with the number of positions, which is perhaps unexpected for the full Cayley graph. 
    
    Moreover, since the stationary distribution is uniform, the standard spectral estimate yields the general mixing-time upper bound (see Levin and Peres~\cite[Theorem 12.4]{Levin2017})
\[
   t^{\mathrm{lazy}}_{\mathrm{mix}}(\epsilon)
   =O_m\!\left(n\log\frac{|V|}{\epsilon}\right),
\] where $V$ is the vertex set of the relevant Cayley graph or the Schreier graph. The spectral-gap estimate also yields a lower bound of order $n$ for fixed $\epsilon$; that is, $t_{\mathrm{mix}}^{\mathrm{lazy}}(\epsilon)=\Omega_m(n)$ as $n\to\infty$ for fixed $m\geq2$ and $\epsilon\in(0,1/2)$. We do not, however, have a matching asymptotic lower bound of order $n\log(|V|/\epsilon)$.

\end{itemize}

The present argument does not determine the exact spectral gap or the precise mixing-time scale, nor does it establish the exact asymptotic order of the Cheeger constant.  These questions provide natural directions for further study, aloing with Conjecture~\ref{con:ineq}.

\section*{Acknowledgment} The author thanks Qiyuan (Alex) Gu for pointing out that strict inequality can happen in~(\ref{eq:mn}). In particular, $\psi_{6,2}<\beta_{6,2}$. Gu also provided code that was helpful in formulating Conjecture~\ref{con:ineq}

\bibliographystyle{plain}

\end{document}